\numberwithin{equation}{section}
\theoremstyle{plain}
\newtheorem{theorem}{Theorem}[section]
\newtheorem{lem}[theorem]{Lemma}
\theoremstyle{Corollary}
\newtheorem{cor}[theorem]{Corollary}
\newtheorem{proposition}{Proposition}[section]
\newtheorem{remark}{Remark}[section]
\newcommand{\beq}{\begin{equation}}
\newcommand{\eeq}{\end{equation}}
\newcommand{\beqs}{\begin{eqnarray*}}
\newcommand{\eeqs}{\end{eqnarray*}}
\newcommand{\beqn}{\begin{eqnarray}}
\newcommand{\eeqn}{\end{eqnarray}}
\newcommand{\beqa}{\begin{array}}
\newcommand{\eeqa}{\end{array}}
\begin{document}
\title{An Anisotropic shrinking flow and $L_p$ Minkowski problem }

\author{Weimin Sheng}
\address{Weimin Sheng: School of Mathematical Sciences, Zhejiang University, Hangzhou 310027, China.}
\email{weimins@zju.edu.cn}

\author{Caihong Yi}
\address{Caihong Yi:
School of Mathematical Sciences, Zhejiang University, Hangzhou 310027, China.}
\email{11735001@zju.edu.cn}

\thanks{The authors were supported by NSFC, grant nos. 11971424 and 11571304.}
\keywords{shrinking flow, anisotropic flow, asymptotic behaviour}

\subjclass[2010]{35K96, 53C44}

\begin{abstract}
In this paper, we consider a shrinking flow of smooth, closed, uniformly convex hypersurfaces in Euclidean $R^{n+1}$ with speed $fu^\alpha\sigma_n^{-\beta}$, where $u$ is the support function of the hypersurface, $\alpha, \beta \in R^1$, and $\beta>0$, $\sigma_n$ is the $n$-th symmetric polynomial of the principle curvature radii of the hypersurface.  We prove that the flow exists a unique smooth solution for all time and converges smoothly after normalisation to a smooth solution of the equation
$fu^{\alpha-1}\sigma_n^{-\beta}=c$ in the following cases $1-n\beta-2\beta<\alpha<1+n\beta$, $\alpha\ne1-\beta$ and $\alpha=0$, $\beta=1$ respectively, provided the initial hypersuface is origin-symmetric and $f$ is a smooth positive even function on $S^n$.  For the case $\alpha\ge 1+n\beta$, $\beta>0$, we prove that the flow  converges smoothly after normalisation to a unique smooth solution of
$fu^{\alpha-1}\sigma_n^{-\beta}=c$ without any constraint on the initial hypersuface and smooth positive function $f$. When $\beta=1$, our argument provides a uniform proof to the existence of the solutions to the $L_p$ Minkowski problem $u^{1-p}\sigma_n=\phi$ for $p\in(-n-1,+\infty)$ where $\phi$ is a smooth positive function on $S^n$.
\end{abstract}

\maketitle

\baselineskip16pt
\parskip3pt

\section{Introduction}
Let $\mathcal{M}_0$ be a smooth, closed and uniformly convex hypersurface in $R^{n+1}$, and $\mathcal{M}_0$ encloses the origin. We study the following anisotropic shrinking curvature flow
\begin{equation}\label{SF}
\left\{
\begin{array}{ll}
\frac{\partial{X}}{\partial{t}}(\cdot,t)&=-f(\nu)< X,\nu>^{{\alpha}}K^{\beta}\nu,\\
X(\cdot,0)&=X_0(\cdot),
\end{array}
\right.
\end{equation}
where $\mathcal{M}_t$ is parametrized by the inverse Gauss map $X:S^n\to \mathcal{M}_t\subset R^{n+1}$ and encloses origin, $K$ is the Gauss curvature of $\mathcal{M}_t$, $\nu$ is the unit outer normal at $X(\cdot,t)$, and $f$ is a smooth positive function on $S^n$.

In 1974, Firey \cite{Fir74} firstly introduced the Gauss curvature flow as a model for the shape change of tumbling stones. Huisken \cite{Hui84} considered the mean curvature flow in 1984.
Thereafter, a range of flows with the speed of the symmetric polynomial of principal curvatures were studied, see \cite{Chow85, Chow87, And99, And00} etc.   For the curvature flow at the speed of $\alpha$-power of the Gauss-Knonecker curvature, in the affine invariant case $\alpha=\frac{1}{n+2}$, Andrews\cite{And96} showed that the flow converges to an ellipsoid. It was conjectured that the solution will converge to a round point along the flow for $\alpha>\frac{1}{n+2}$. Chow \cite{Chow85}, Andrews \cite{And94},  Andrews et al.\cite{AGN16}, Choi and Daskalopoulos \cite{ChoiDask16}, gave some partial answers respectively. In \cite{BCD16}, Brendle et al. finally resolved the conjecture for all $\alpha>\frac{1}{n+2}$ in all dimensions recently. As a natural extension, anisotropic flows usually provide alternative proofs and smooth category approach of the existence of solutions to elliptic PDEs arising in convex geometry, see \cite{Wang96, And97, ChWang00, GuNi17, LSW16, Iva2018} etc.. For the existence problem of the prescribed polynomial of the principal curvature radii of the hypersurface, Urbas\cite{Urb91}, Chow and Tsai\cite{ChowH97}, Gerhardt \cite{Gerh14}, Xia \cite{X16}, Li, Sheng and Wang\cite{LSW18} studied the convergence of the flows with the speed of $F(\lambda_1,\ldots,\lambda_n)$, where $F$ is a certain symmetric polynomial of the principal curvature radii $\lambda_1,\ldots, \lambda_n$ of the hypersurface. Especially in \cite{And97}, Andrews studied an anisotropic shrinking flow. By introducing some monotone quantities, he proved the flow converges after normalisation to a smooth hypersurface which satisfies a soliton equation.

Under the flow \eqref{SF}, the support function $u$ satisfies
\begin{equation}\label{SF1}
\left\{
\begin{array}{ll}
\frac{\partial{u}}{\partial{t}}(x,t)&=-f(x)u^{{\alpha}}(x,t){\sigma}_{n}^{-\beta},\\
u(\cdot,0)&=u_0(\cdot).
\end{array}
\right.
\end{equation}
where $\sigma_n$ is the $n$-th elementary symmetric function for principal curvature radii, i.e.
\[\sigma_n(.,t)=\lambda_{1}\cdots\lambda_{n},\]
$\lambda_i \, (1\le i\le n)$ is the principal curvature radii of hypersurface $\mathcal{M}_t$. We prove that the flow exists for all time and converges smoothly after normalisation to a soliton which is a solution of $fu^{\alpha-1}\sigma_n^{-\beta}=c$  in the following cases:
 $1-n\beta-2\beta<\alpha<1+n\beta$, $\alpha\ne1-\beta$ and $\alpha=0$, $\beta=1$, respectively,
if the initial hypersurface is origin-symmetric and $f$ is a smooth positive even function on $S^n$. For the case $\alpha\ge1+n\beta$, $\beta>0$, we prove that the flow converges smoothly after normalisation to a unique smooth solution of
$fu^{\alpha-1}\sigma_n^{-\beta}=c$ without any constraint on the initial hypersurface and the smooth positive function $f$.

In fact, when $\beta=1$, the elliptic equation $fu^{\alpha-1}\sigma_n^{-\beta}=c$ is just the well-known $L_p$ Minkowski problem $u^{1-p}\sigma_n=\phi$ for $p\ge -n-1$ in the smooth category. The $L_p$ Minkowski problem was introduced by Lutwak in \cite{Luk93}, where he asked for necessary and sufficient conditions that would guarantee that a given measure on the unit sphere would be the $L_p$ surface area measure of a convex body.  Our proof provides a uniform approach to the existence of the solutions to the problem for the case $-n-1< p<n+1$ with the assumption that the function $\phi$ is even, and the case $p\ge n+1$ without any constraint on $\phi$.  In \cite{Luk93} Lutwak proved the solution to the $L_p$ Minkowski problem is unique for $p>1$ and $p\neq n$ if $\phi$ is an even positive function.  In \cite{LO95}Lutwak and Oliker also proved the regularity of the solution in this case. When $p=-n-1$, it is the centro-affine Minkowski problem which was studied by Chou-Wang \cite{ChWang06}, Lu-Wang \cite{LW13}, Zhu \cite{Zhu15} and Li \cite{L17}. In \cite{ChWang06} the authors also considered the $L_p$ Minkowski problem without the evenness assumption on $\phi$, and proved the existence of the $C^2$ convex solution for the case $p\ge1+n$ and the weak solution for the case $1<p<n+1$. The uniqueness of the solution was also proved for $p>n+1$ in \cite{ChWang06}. When $p=1$, it is the classical Minkowski problem, it was finally solved by Cheng-Yau\cite{CY77} and Pogorelev\cite{P78}. For the case $0\le p<1$, Haberl et al. \cite{CEDZ10}, Zhu \cite{Zhu15} studied the existence of the solutions, and Chen et al.\cite{CLZ17} finally solved the problem. Jian et al. \cite{JLW15} proved that the $L_p$ Minkowski problem admits two solutions when $-n-1<p<0$.  Y. He et al. \cite{HLW16} constructed multiple solutions for the case $-n-1<p<-n$. The additional extensions for  $L_p$ Minkowski problem can be learned, see, \cite{HLYZ05, C06, BLDZ13, HLX15} etc. for example. By constructing an anisotropic expanding flow, Bryan et al. \cite{BIS16} also gave a unified flow approach to the existence of smooth, even $L_p$ Minkowski problems for $p>-n-1$. Their approach is in $C^1$ when $p>n+1$, and for a subsequence when $p\in (-n-1, 1)$. Our theorem will improve their result.

We define
\[\widetilde{u}=\Big(\frac{|S^n|}{V_{n+1}(\underbrace{u,u,\ldots,u}_{(n+1)-times})}\Big)^{\frac{1}{n+1}}u,\]
where the definition of $V_{n+1}(u,u,\ldots,u)$ may refer Section 2. In fact, it is just the volume of convex body $\Omega_t$, where $\partial\Omega_t=\mathcal{M}_t$. A direct calculation shows
 \begin{eqnarray}\label{equality} \int_{S^n}\widetilde{u}\sigma_n[W_{\widetilde{u}}]d\mu=|S^n|.
\end{eqnarray}
Considering the following normalised flow of \eqref{SF1}
\begin{equation}\label{NSF1}
\left\{
\begin{array}{ll}
\partial_\tau u&=-fu^\alpha\sigma_n^{-\beta}+u\frac{\int_{S^n}fu^\alpha\sigma_n^{1-\beta}d x_{S^n}}{|S^n|},\\
u(.,0)&=u_0.
\end{array}
\right.
\end{equation}
where we still use $u$ instead of $\widetilde{u}$ for convenience, and
\[\tau=\int_0^t\Big(\frac{|S^n|}{V_{n+1}(u,u,\ldots,u)}\Big)^{\frac{1+n\beta-\alpha}{n+1}}ds.\]

We still use $t$ instead of $\tau$ to denote the time variable if no confusions arise, and we set
\begin{eqnarray}\label{def eta t}
\eta(t)=\frac{\int_{S^n}fu^\alpha\sigma_n^{1-\beta}dx}{|S^n|},
\end{eqnarray}
hence the flow \eqref{NSF1} can be written as
\begin{equation}\label{NSF}
\left\{
\begin{array}{ll}
\partial_t u&=-fu^\alpha\sigma_n^{-\beta}+\eta(t)u,\\
u(.,0)&=u_0.
\end{array}
\right.
\end{equation}
Now we introduce a quantity which is similar to the one introduced by Andrews in \cite{And97},
\[\mathcal{Z}_p(u(\cdot,t))=\int_{S^n}u\sigma_n(fu^{\alpha-1}\sigma_n^{-\beta})^pdx,\]
where $p\in R^1$. When $p=0$, $\mathcal{Z}_0(u(\cdot,t))=\int_{S^n} u\sigma_n dx=|S^n|$, see \eqref{equality}. We will show the quality $\mathcal{Z}_p(u(\cdot,t))$ plays a key role in this paper.

When $p=\frac{1}{\beta}$, consider the following functional

\begin{equation}\label{functional}
\mathcal{J}(u(\cdot,t))=\left\{\begin{array}{ll}
\mathcal{Z}_{\frac{1}{\beta}}(u(\cdot,t)),
&  \textrm{if $\alpha>1-n\beta-2\beta$, $\alpha\ne1-\beta$, $\beta>0$,}\\
\frac{\int_{S^n}f \log u dx}{\int_{S^n}f dx}-\frac{1}{n+1}\log\int_{S^n}u\sigma_n dx, &
\textrm{if $\alpha=0$, $\beta=1$.}
\end{array}\right.
\end{equation}
where the last functional were introduced by Huang et al. \cite{HLYZ16}.
We will show in Lemma \ref{monotone12}, Lemma \ref{monotone3} and Lemma \ref{monotone4} that $\mathcal{J}(u(\cdot,t))$ is strictly monotone along the flow \eqref{NSF} and $\frac{d}{dt}\mathcal{J}(u(\cdot,t))=0$ if and only if $u(\cdot,t)$ solves
\begin{equation}\label{ellipic t}
fu^{\alpha-1}\sigma_n^{-\beta}=\eta(t).
\end{equation}

\begin{figure*}[ht]
\centering
\includegraphics[width=7cm,height=7cm]{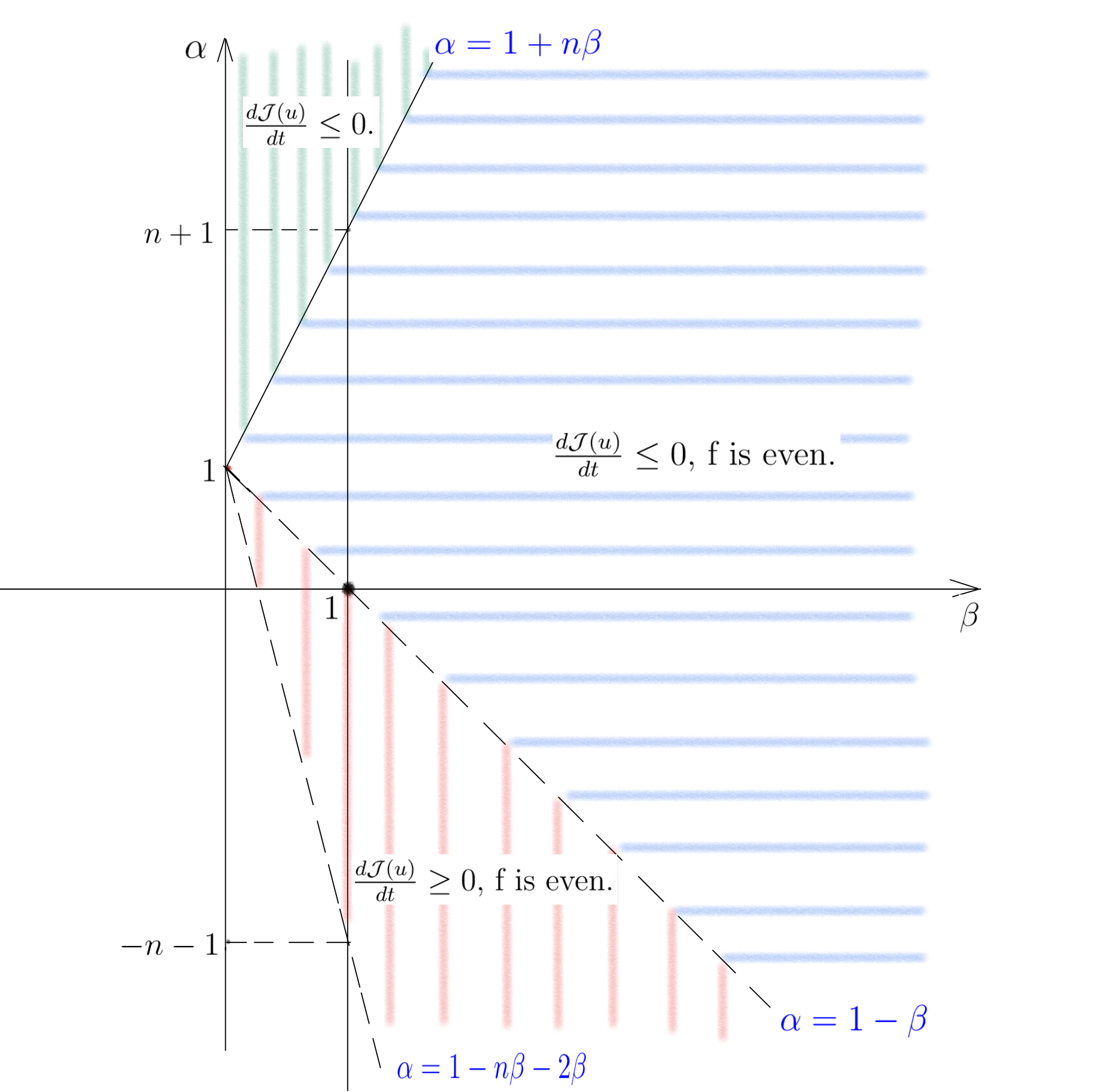}
\end{figure*}The monotonicity of the functional ensures that the normalised flow \eqref{NSF} converges to the elliptic equation
\begin{equation}\label{elliptic eq}
fu^{\alpha-1}\sigma_n^{-\beta}=c,
\end{equation}
for some positive constant $c$ as $t\to\infty$. When $\alpha\ne1+n\beta$,  if \eqref{elliptic eq} has a uniformly convex solution $u$, then $c^\frac{1}{1+n\beta-\alpha}u$ is just a solution of elliptic equation of $fu^{\alpha-1}\sigma_n^{-\beta}=1$ by homogeneity.
Note that when $\alpha=1-\beta$, the elliptic equation becomes $fu^{-\beta}\sigma_n^{-\beta}=1$ which is the equation $\bar{f}u^{-1}\sigma_n^{-1}=1$ with $\bar{f}=f^\frac{1}{\beta}$ and $\alpha=0$, $\beta=1$.
In order to prove the long time existence of the smooth solution to the flow \eqref{NSF}, we need to prove the a priori estimates ($C^0$ estimates, $C^1$ estimates and $C^2$ estimates) by the Evans-Krylov's regularity theory for parabolic equations.  The key step is to get the $C^0$ estimates and the uniform upper bound of $\eta(t)$ in our argument. We conclude the flow \ref{NSF} exists for all times $t>0$ and $u(\cdot, t)$ remains positive, smooth and uniformly convex. By the monotonicity of $\mathcal{J}(u(\cdot,t))$, there is a sequence of $t_i\to\infty$ such that $u(\cdot,t_i)\to u_\infty(\cdot)$ which solves \eqref{elliptic eq}, where $c=\lim_{t_i\to\infty}\eta(t_i)$ is a positive constant.

In this paper, we will prove the following
\begin{theorem}\label{main1}
Let $\mathcal{M}_0$ be a smooth, closed, uniformly convex, and origin-symmetric hypersurface in $R^{n+1}$, $n\ge2$, enclosing the origin. For the cases
$1-\beta<\alpha<1+n\beta$ and $\alpha=0$, $\beta=1$, respectively,
the flow \eqref{SF1} has a unique smooth and uniformly convex solution $\mathcal{M}_t$ provided that $f$ is a smooth positive even function on $S^n$. After normalisation, the rescaled hypersurfaces $\widetilde{\mathcal{M}_t}$ converge smoothly to a smooth solution of \eqref{elliptic eq}, which is a minimiser of the functional \eqref{functional}.
\end{theorem}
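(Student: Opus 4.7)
I would follow the standard four-stage program for anisotropic curvature flows: first establish uniform a priori $C^0$, $C^1$, and $C^2$ estimates for the normalised equation \eqref{NSF}; then invoke Evans--Krylov together with parabolic Schauder theory to bootstrap to smooth estimates and long-time existence; use the strict monotonicity of $\mathcal{J}$ (referenced above) to extract a sequence $t_i \to \infty$ along which $u(\cdot, t_i) \to u_\infty$ smoothly with $u_\infty$ solving \eqref{elliptic eq}; finally identify $u_\infty$ as a minimiser of $\mathcal{J}$ as required by the theorem.

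\textbf{$C^0$ estimates and control of $\eta$.} Since $f$ is even and $\mathcal{M}_0$ is origin-symmetric, both properties are preserved along \eqref{NSF}, so $u(x,t) = u(-x,t)$ for all $t$. Combining the volume normalisation \eqref{equality} with the monotonicity of $\mathcal{J}$ yields a two-sided bound on $\mathcal{J}(u(\cdot,t))$ along the flow. For an origin-symmetric convex body with $\max_{S^n} u(\cdot,t) = u(x_t,t) =: M_t$, the central inclusion $[-M_t x_t,\, M_t x_t] \subset \Omega_t$ forces $u(x,t) \ge M_t\,|\langle x, x_t\rangle|$. Plugging this lower bound into $\mathcal{Z}_{1/\beta}$ (respectively into the entropy functional in the case $\alpha=0$, $\beta=1$) and comparing against $\int u \sigma_n\, dx = |S^n|$ produces a uniform upper bound $M_t \le C$; the volume constraint together with the $L^\infty$ bound then yields a uniform lower bound $\min u \ge c > 0$. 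The upper bound on $\eta(t)$ in \eqref{def eta t} is an immediate consequence.

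\textbf{Higher regularity, convergence, and main obstacle.} The $C^1$ bound is automatic from $C^0$, since $u$ is the support function of a uniformly bounded convex body. For $C^2$ one needs $\sigma_n$ bounded away from $0$ and $\infty$ uniformly in time; this is obtained by applying the maximum principle to auxiliary quantities such as $\sigma_n^{-\beta}/(u-\ve_0)$ and $\lambda_{\max}\,\vphi(u)$ for a suitable $\vphi$, in the spirit of \cite{And97, LSW18}, using the $C^0$ bound and the bound on $\eta(t)$ from Step 1 as inputs. Uniform parabolicity together with Evans--Krylov and Schauder bootstrapping then deliver smooth estimates to all orders, hence long-time existence. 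The monotonicity of $\mathcal{J}$ combined with its boundedness gives $\int_0^\infty |\tfrac{d}{dt}\mathcal{J}|\,dt < \infty$, so along some $t_i \to \infty$ one has $\tfrac{d}{dt}\mathcal{J}(u(\cdot, t_i)) \to 0$; the equality case (Lemma \ref{monotone12} and Lemma \ref{monotone3}/\ref{monotone4}) forces the smooth limit $u_\infty$ to satisfy $f u_\infty^{\alpha-1} \sigma_n^{-\beta} = c$, with $c = \lim \eta(t_i) > 0$. The decisive difficulty is the $C^0$ estimate: one must rule out the scenario in which $\Omega_t$ degenerates into a thin slab and $u$ collapses on a great subsphere. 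The evenness of $f$ and origin-symmetry of $\mathcal{M}_0$ enter precisely through the central inclusion above, and the range $1-\beta < \alpha < 1+n\beta$ is exactly where $\mathcal{Z}_{1/\beta}$ is coercive on origin-symmetric support functions normalised by $\int u\sigma_n = |S^n|$; outside this range the present approach breaks down.
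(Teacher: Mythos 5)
Your proposal follows essentially the same strategy as the paper: origin-symmetry plus the inclusion $[-M_t x_t, M_t x_t]\subset\Omega_t$ gives $u(x,t)\ge M_t|\langle x,x_t\rangle|$, which combined with the monotonicity of $\mathcal{J}$ and the normalisation $\int u\sigma_n\,dx=|S^n|$ yields two-sided $C^0$ bounds; $C^1$ then follows by convexity; $C^2$ is obtained by a maximum principle; Evans--Krylov and Schauder deliver long-time existence; and the strict monotonicity of $\mathcal{J}$ forces subsequential convergence to a solution of \eqref{elliptic eq}. Your observation that $\alpha>1-\beta$ is what makes $\mathcal{Z}_{1/\beta}$ coercive on the normalised class matches the paper's Lemma~\ref{C0estimate1} exactly.

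Two points where the paper differs or where your sketch is loose. First, for the $C^2$ upper bound on the principal curvature radii the paper does not work directly with $\lambda_{\max}\varphi(u)$ on $\mathcal{M}_t$; instead it passes to the polar dual $\mathcal{M}^*_t$ (following \cite{LSW16} and the idea of Ivaki \cite{Iva14}), rewrites the flow as a Gauss-curvature-type flow for $u^*$, and applies the maximum principle to $w=\log h_*^{\tau\tau}-\varepsilon\log u^*+\tfrac{M}{2}({u^*}^2+|\nabla u^*|^2)$. This duality trick is structurally different from your proposed auxiliary quantity and is a distinctive feature of the paper's argument. Second, your claim that the upper bound on $\eta(t)$ is ``an immediate consequence'' of the $C^0$ estimates is not accurate when $\beta>1$: since $\eta=\frac{1}{|S^n|}\int f u^\alpha\sigma_n^{1-\beta}\,dx$ and $1-\beta<0$, a $C^0$ bound alone does not control $\eta$ without a lower bound on $\sigma_n$. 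The paper handles this carefully: for $0<\beta\le1$ a H\"older splitting $\mathcal{Z}_1\le\mathcal{Z}_0^{1-\beta}\mathcal{Z}_{1/\beta}^{\beta}$ works; for $\alpha\ge0,\ \beta>1$ one shows $\frac{d}{dt}\mathcal{Z}_1\le0$ directly; and for $1-\beta<\alpha<0,\ \beta>1$ the bound on $\eta$ is extracted from \emph{inside} the maximum-principle argument for the lower bound on $\sigma_n$ (Lemma~\ref{lowerboundsigma_n}), by estimating $\eta\le C_0 G(x_t,t)^{(\beta-1)/\beta}$ at the maximum point of $G$. This is a non-trivial interleaving of the $\eta$-bound and the $C^2$ estimate that your sketch glosses over; without it, the $C^2$ step would lack a required input when $\beta>1$ and $\alpha<0$.
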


\begin{theorem}\label{main2}
Let $\mathcal{M}_0$ be a smooth, closed, uniformly convex, and origin-symmetric hypersurface in $R^{n+1}$, $n\ge2$, enclosing the origin. When $1-n\beta-2\beta<\alpha<1-\beta$,
suppose $f$ is a smooth positive even function on $S^n$, then the flow \eqref{SF1} has a unique smooth and uniformly convex solution $\mathcal{M}_t$. After normalisation, the rescaled hypersurfaces $\widetilde{\mathcal{M}_t}$ converge smoothly to a smooth solution of \eqref{elliptic eq}, which is a maximiser of the functional \eqref{functional}.
\end{theorem}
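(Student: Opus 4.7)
The plan is to run the standard program for anisotropic shrinking curvature flows on the normalised equation \eqref{NSF}: establish uniform a priori estimates, invoke Krylov--Safonov and Schauder to obtain long-time existence, and then harvest the monotonicity of $\mathcal{J}=\mathcal{Z}_{1/\beta}$ stated in the introduction to extract a smooth limit solving \eqref{elliptic eq}. The structural difference with Theorem \ref{main1} lies only in the $C^0$ step and in the sign of the monotonicity, which in the range $1-n\beta-2\beta<\alpha<1-\beta$ forces the limit to be a maximiser rather than a minimiser of $\mathcal{J}$.

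Short-time existence is standard on the space of positive even support functions of $C^2$ uniformly convex bodies, since \eqref{NSF} is strictly parabolic there. Origin-symmetry and evenness are preserved along the flow because the right-hand side is invariant under $x\mapsto -x$ when $f$ and $u_0$ are even, so $u(\cdot,t)$ remains the support function of an origin-symmetric body for as long as it exists.

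The main obstacle is the $C^0$ estimate in this subcritical range of $\alpha$. I expect the argument to proceed in three substeps, each of which uses evenness in an essential way. First, the normalisation $\int_{S^n} u\sigma_n\,dx = |S^n|$ gives a rough upper bound on $\min_{S^n} u$ and a rough lower bound on $\max_{S^n} u$. Second, evenness of $u(\cdot,t)$ forces $\max_{S^n} u \le c_n \min_{S^n} u$ via the John-type control on the ratio of circumradius to inradius for origin-symmetric convex bodies. Third, the monotonicity of $\mathcal{Z}_{1/\beta}$ along \eqref{NSF} controls $\int_{S^n} f\, u^\alpha\sigma_n^{1-\beta}\,dx$ uniformly in $t$; combining these three via H\"older's inequality should pinch $u$ between two positive constants independent of $t$. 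From the $C^0$ bounds and \eqref{def eta t} one then reads off $0<c_1\le \eta(t)\le C_1$. Without evenness the second substep fails, so the hypothesis on $f$ and $\mathcal{M}_0$ is used here in a non-removable way.

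Once $u$ is pinched, a Tso-type maximum principle applied to a suitable quotient such as $f u^{\alpha-1}\sigma_n^{-\beta}/(u-\epsilon_0)$ will give an upper bound on the speed, hence a positive lower bound on $\sigma_n$; an Urbas--Andrews maximum principle applied to the largest principal radius will then yield an upper bound on the radii, using the parabolicity of $-\sigma_n^{-\beta}$ in the Weingarten matrix. With uniform two-sided $C^2$ bounds the equation is uniformly parabolic, and Krylov--Safonov together with Schauder give uniform $C^{k,\alpha}$ bounds for every $k$, whence long-time existence. For convergence, the monotonicity lemma provides a dissipation identity of the form $\frac{d}{dt}\mathcal{J}\ge 0$ whose time-integral is bounded; this forces $f u^{\alpha-1}\sigma_n^{-\beta}-\eta(t)\to 0$ along a subsequence $t_i\to\infty$, and $C^\infty$ compactness yields a smooth limit $u_\infty$ solving \eqref{elliptic eq} with $c=\lim_{t_i\to\infty} \eta(t_i)>0$. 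Since the monotonicity direction in this range is opposite to that of Theorem \ref{main1}, $u_\infty$ is a maximiser of the functional \eqref{functional}.
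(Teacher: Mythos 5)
Your overall architecture (short-time existence, $C^0$, speed bound, curvature bound, Krylov--Safonov, convergence via the monotone functional) matches the paper, but the $C^0$ estimate step contains a genuine error that your argument cannot recover from.

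In your Substep 2 you claim that evenness of $u(\cdot,t)$ forces $\max_{S^n}u \le c_n\min_{S^n}u$ ``via the John-type control on the ratio of circumradius to inradius for origin-symmetric convex bodies.'' This is false. John's theorem for a symmetric body gives $E\subset K\subset \sqrt{n+1}\,E$ for some origin-centred ellipsoid $E$, but $E$ itself can be arbitrarily eccentric: an origin-symmetric box $[-M,M]\times[-1/M,1/M]^n$ has $\max u/\min u\sim M^2$. Origin-symmetry gives no pinching of the aspect ratio, so Substeps 1--3 as stated cannot be combined to produce two-sided $C^0$ bounds. You also slightly misidentify the monotone quantity: in the range $1-n\beta-2\beta<\alpha<1-\beta$ what is monotone (nondecreasing) is $\mathcal{Z}_{1/\beta}(u)=\int_{S^n}f^{1/\beta}u^{(\alpha-1+\beta)/\beta}\,dx$, not $\mathcal{Z}_1=\int_{S^n}fu^\alpha\sigma_n^{1-\beta}\,dx=|S^n|\eta(t)$; the former is the useful one precisely because it depends only on $u$ and not on $\sigma_n$.

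The actual route in the paper for the upper $C^0$ bound is a contradiction argument built on the lower bound $\mathcal{Z}_{1/\beta}(u(t))\ge \mathcal{Z}_{1/\beta}(u_0)$, which (because $q:=(\alpha-1+\beta)/\beta\in(-n-1,0)$ in this $\alpha$-range) yields a uniform lower bound
\[
0<c\le \int_{S^n}u^{q}\,dx.
\]
Assuming $u_{\max}(t_j)=d_j/2\to\infty$, one uses origin-symmetry only to write $u_j(y)\ge \tfrac12 d_j|x_0\cdot y|$ (the segment joining $\pm u_{\max}x_0$ lies in $\Omega_j$). One then decomposes $S^n$ into three regions according to whether the John-ellipsoid support function $u_{E_j}$ is small, moderate, or large. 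The small-$u_{E_j}$ region is estimated by H\"older and the Blaschke--Santal\'o inequality (which bounds $\int_{S^n}u_{E_j}^{-(n+1)}dx$ because $\mathrm{Vol}(E_j)\ge\mathrm{Vol}(\Omega_j)$ is fixed by the normalisation); the moderate region has vanishing measure; the large-$u_{E_j}$ region contributes at most $O(\delta^{-q})$ since $q<0$. Sending $d_j\to\infty$ and then $\delta\to0$ contradicts $c\le\int u^q$. The lower $C^0$ bound then comes from the volume normalisation $\int u\sigma_n\,dx=|S^n|$ together with the origin-symmetric cube-fitting inequality $|S^n|\le 2^{n+1}r_{\max}^n r_{\min}$. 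Without the Blaschke--Santal\'o step your argument has no mechanism to preclude $u_{\max}\to\infty$, so the gap is essential rather than expository.
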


\begin{theorem}\label{main3}
Let $\mathcal{M}_0$ be a smooth, closed and uniformly convex hypersurface in $R^{n+1}$, $n\ge2$, enclosing the origin. Suppose $\alpha\ge1+n\beta$, $\beta>0$,
Then for any smooth positive function $f$ on $S^n$, the flow \eqref{SF1} has a unique smooth and uniformly convex solution $\mathcal{M}_t$. After normalisation, the rescaled hypersurfaces $\widetilde{M_t}$ converge smoothly to a unique smooth solution of \eqref{elliptic eq}, which is a minimiser of the functional \eqref{functional}.
\end{theorem}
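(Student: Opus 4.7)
The plan is as follows. Short-time existence of the normalised flow \eqref{NSF} follows from the standard theory of fully nonlinear strictly parabolic equations on $S^n$: on the space of uniformly convex support functions the linearisation of $-u^{\alpha}\sigma_n^{-\beta}$ in $W_{ij}=\nabla_{ij}u+u\delta_{ij}$ equals $\beta u^{\alpha}\sigma_n^{-\beta}(W^{-1})^{ij}$, which is positive definite. The real work is to produce uniform $C^0$, $C^1$, $C^2$ a priori estimates on $u(\cdot,t)$; combined with the Evans--Krylov $C^{2,\gamma}$ theorem and parabolic Schauder estimates these yield uniform $C^{k}$ bounds, hence long-time existence. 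Convergence as $t\to\infty$ will be extracted from the strict monotonicity of $\mathcal{J}(u(\cdot,t))=\mathcal{Z}_{1/\beta}(u(\cdot,t))=\int_{S^n} f^{1/\beta}u^{(\alpha+\beta-1)/\beta}\,dx$ guaranteed by the lemmas referenced in the introduction.

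The key reason the hypothesis $\alpha\ge 1+n\beta$ allows one to drop the origin-symmetry and evenness assumptions of Theorems \ref{main1} and \ref{main2} is that the pointwise maximum principle gives $C^0$ control directly. At a spatial maximum of $u(\cdot,t)$ we have $\nabla u=0$ and $\nabla^{2}u\le 0$, so $W_{ij}\le u\delta_{ij}$ and hence $\sigma_n\le u^n$. Plugging this into \eqref{NSF} gives
\[
\frac{d}{dt}u_{\max}\;\le\;u_{\max}\bigl(\eta(t)-f_{\min}\,u_{\max}^{\alpha-1-n\beta}\bigr),
\]
and the condition $\alpha-1-n\beta\ge 0$ forces $u_{\max}$ to remain bounded as soon as $\eta(t)$ is bounded above. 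A symmetric computation at the minimum yields $\sigma_n\ge u^{n}$ and
\[
\frac{d}{dt}u_{\min}\;\ge\;u_{\min}\bigl(\eta(t)-f_{\max}\,u_{\min}^{\alpha-1-n\beta}\bigr),
\]
so a positive lower bound for $\eta(t)$ will prevent $u_{\min}$ from collapsing to $0$. The needed two-sided control of $\eta(t)$ comes from combining the normalisation $\int u\sigma_n\,dx=|S^n|$ (preserved along \eqref{NSF}) with the monotonicity of $\mathcal{J}$, which controls $\int f^{1/\beta}u^{(\alpha+\beta-1)/\beta}\,dx$ throughout the flow.

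With $c_0\le u\le C_0$ in hand, the $C^1$ estimate is automatic because $\Omega_t$ is trapped between two concentric spheres, so $u$ is Lipschitz on $S^n$ with a fixed constant. The $C^2$ estimates, i.e.\ an upper bound on the principal radii together with a positive lower bound on $\sigma_n$, are the genuinely delicate analytic step: following the approach of Chou--Wang and Andrews, one differentiates \eqref{NSF} twice and applies the maximum principle to auxiliary quantities of the form $\log\lambda_{\max}-A\log u+B|X|^{2}$, using the sign of $\alpha-1-n\beta$ and the established $C^{0}$ bounds to absorb the dangerous terms. Evans--Krylov then upgrades $C^{2}$ to $C^{2,\gamma}$, and Schauder iteration gives $C^{k,\gamma}$ for every $k$, so the flow extends smoothly to $t\in[0,\infty)$.

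For the asymptotics, $\mathcal{J}$ is bounded below, monotone non-increasing along \eqref{NSF}, and $\tfrac{d}{dt}\mathcal{J}=0$ iff $u$ satisfies $fu^{\alpha-1}\sigma_n^{-\beta}=\eta(t)$. Extracting a smooth subsequential limit $u_\infty$ along any $t_i\to\infty$ therefore produces a uniformly convex solution of \eqref{elliptic eq} with $c=\lim_{i\to\infty}\eta(t_i)>0$, and $u_\infty$ is a minimiser of $\mathcal{J}$ in the admissible class; uniqueness of the limit (and hence smooth convergence of the whole flow rather than just a subsequence) will follow from the variational characterisation of critical points of $\mathcal{J}$ in this exponent range. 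The main obstacle I anticipate is precisely the two-sided control of $\eta(t)$ in the absence of any symmetry hypothesis: because $\sigma_n$ is not comparable to any single power of $u$ when $\beta\ne 1$, the volume normalisation alone does not pin $\eta(t)=|S^n|^{-1}\int fu^{\alpha}\sigma_n^{1-\beta}\,dx$ into a fixed interval $[\eta_*,\eta^*]\subset(0,\infty)$, and it is this interplay between the pointwise bounds, the monotonicity of $\mathcal{J}$ and the volume constraint that carries the proof.
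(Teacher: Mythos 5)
Your plan captures the right structure for the strict inequality case $\alpha>1+n\beta$, and the pointwise ODE argument at a spatial extremum of $u$ is indeed exactly what the paper uses there (Lemma \ref{C0estimate2}). But there are two concrete gaps.

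First, you correctly identify two-sided control of $\eta(t)$ as the obstacle and then leave it unresolved. The paper handles it in Lemma \ref{boundeta}: pick any $\theta\le\tfrac{1+n}{1+n\beta-\alpha}<0$ and show that $\mathcal{Z}_\theta(u(\cdot,t))$ is non-increasing along \eqref{NSF} (the coefficients in the $\tfrac{d}{dt}\mathcal{Z}_p$ identity both have the favourable sign for such $\theta$, and H\"older supplies the needed comparisons $\mathcal{Z}_{1+\theta}\le\mathcal{Z}_1\mathcal{Z}_\theta/|S^n|$ and $\mathcal{Z}_{1+\theta}\ge\mathcal{Z}^2_{(1+\theta)/2}/|S^n|$). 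Then H\"older again gives $|S^n|\le\mathcal{Z}_1^{-\theta/(1-\theta)}\mathcal{Z}_\theta^{1/(1-\theta)}$, and since $\mathcal{Z}_\theta\le\mathcal{Z}_\theta(u_0)$ this yields the lower bound $\mathcal{Z}_1\ge C>0$, hence $\eta\ge C/|S^n|$. The upper bound on $\eta$ is Lemma \ref{upboundeta}. Without this, your $C^0$ estimate is circular: the pointwise ODE gives $C^0$ only once $\eta$ is pinned down.

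Second, your claim that "$\alpha-1-n\beta\ge0$ forces $u_{\max}$ to remain bounded" is false at the endpoint $\alpha=1+n\beta$: the ODE degenerates to $\tfrac{d}{dt}u_{\max}\le u_{\max}(\eta-f_{\min})$, which imposes no bound when $\eta>f_{\min}$, and likewise for the minimum. The paper treats $\alpha=1+n\beta$ by a different three-step argument (Lemma \ref{C0estimate2}): one first shows the quantity $Q=fu^{\alpha-1}\sigma_n^{-\beta}$ satisfies a pure parabolic equation (no zeroth-order term when $\alpha-1-n\beta=0$) and so is bounded above and below along the flow; then one bounds $|\nabla\log u|$ by a maximum-principle computation; finally one combines the resulting bound $C^{-1}\le u^{n\beta}\sigma_n^{-\beta}\le C$ with the normalisation $\int_{S^n}u\sigma_n\,dx=|S^n|$ to extract $C^{-1}\le u\le C$. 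Since the theorem covers $\alpha\ge1+n\beta$, this borderline case is not optional. Your treatment of uniqueness of the limit is also vague where the paper is concrete: for $\alpha>1+n\beta$ it is a direct maximum-principle argument on $u_1/u_2$, and for $\alpha=1+n\beta$ it is cited from Lutwak and Chou--Wang rather than obtained from a "variational characterisation."
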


\begin{remark}
In this paper, we focus on the convergence of the normalized flow \eqref{NSF} by discussing the relationship between $\alpha$ and $\beta$.  When $1<\alpha<1+n\beta$, we prove the uniqueness of the solution to the elliptic equation $fu^\alpha\sigma_n^{-\beta}=c$ in Section 4 Proposition \ref{alpha}. Hence the rescaled hypersurfaces $\widetilde{M_t}$ converge smoothly to a unique smooth solution of \eqref{elliptic eq} for $\alpha>1$.
\end{remark}

By Theorems \ref{main1}-\ref{main3}, we obtain the following result for $L_p$ Minkowski problem.
\begin{cor}
Let $M$ be a smooth, closed and uniformly convex hypersurface in $R^{n+1}$, $n\ge2$, enclosing the origin.
\begin{itemize}
\item[(i)] When $-n-1<p<n+1$, suppose $M$ is origin-symmetric and $\phi$ is a smooth positive even function on $S^n$, then the $L_p$ Minkowski problem $u^{1-p}\sigma_n([\nabla^2 u+uI])=\phi$ has an origin-symmetric smooth solution;
\item [(ii)] When $p\ge1+n$ and $\phi$ is a smooth positive function on $S^n$, then the $L_p$ Minkowski problem  $u^{1-p}\sigma_n([\nabla^2 u+uI])=\phi$ has a unique smooth solution. The uniqueness for $p=n+1$ is up to a dilation.
\end{itemize}
\end{cor}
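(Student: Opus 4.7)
The plan is to deduce this corollary from Theorems~\ref{main1}--\ref{main3} by specialising to $\beta=1$, $\alpha=p$, and $f=\phi$. Under this substitution the soliton equation \eqref{elliptic eq} becomes
\[
u^{1-p}\sigma_n\bigl([\nabla^2 u+uI]\bigr)=c^{-1}\phi,
\]
i.e.\ the $L_p$ Minkowski equation up to the positive scalar factor $c^{-1}$. When $p\ne n+1$, this factor is removable by homogeneity: if $u_\infty$ is the limiting support function furnished by the normalised flow \eqref{NSF}, then $\hat u:=c^{1/(n+1-p)}u_\infty$ satisfies $\hat u^{1-p}\sigma_n(\hat u)=\phi$, and the dilation manifestly preserves smoothness, strict positivity, uniform convexity, and (when present) origin-symmetry.

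For part~(i), given $p\in(-n-1,n+1)$, I would split into three subcases. For $0<p<n+1$ the parameters $\alpha=p$, $\beta=1$ lie in the window $1-\beta<\alpha<1+n\beta$ of Theorem~\ref{main1}; the boundary value $p=0$ is the second branch ($\alpha=0$, $\beta=1$) of the same theorem; and for $-n-1<p<0$ the parameters satisfy $1-n\beta-2\beta<\alpha<1-\beta$, so Theorem~\ref{main2} applies. In each subcase one starts the normalised flow from any origin-symmetric smooth uniformly convex initial hypersurface (for instance the unit sphere), invokes the cited theorem to obtain a smooth origin-symmetric limit $u_\infty$, and applies the dilation above to produce the required smooth origin-symmetric solution of $u^{1-p}\sigma_n=\phi$.

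For part~(ii), take $p\ge n+1$ and set $\alpha=p\ge 1+n\beta$ with $\beta=1$; Theorem~\ref{main3} applies to any smooth positive $\phi$ and any smooth uniformly convex $M_0$, and produces a unique smooth uniformly convex limit $u_\infty$. For $p>n+1$ the same dilation converts $u_\infty$ into the unique smooth solution of the $L_p$ Minkowski equation, with uniqueness inherited directly from Theorem~\ref{main3}. At the borderline $p=n+1$ the dilation exponent $n+1-p$ vanishes, but the equation $u^{-n}\sigma_n=\phi$ is itself homogeneous of degree zero in $u$; solutions therefore come in one-parameter dilation families, and this built-in degeneracy is precisely the ``uniqueness up to a dilation'' mentioned in the statement. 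The main point requiring care is thus purely bookkeeping---verifying that the three $\alpha$-windows of Theorems~\ref{main1}--\ref{main3} jointly cover $(-n-1,n+1)\cup[n+1,\infty)$ without gap once $\beta=1$ is fixed, and that the scalar rescaling used to normalise $c$ preserves all qualitative properties of $u_\infty$; the only genuinely delicate feature is the borderline exponent $p=n+1$ just discussed.
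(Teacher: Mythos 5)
Your proposal is correct and matches the derivation the paper leaves implicit: with $\beta=1$, $\alpha=p$, $f=\phi$ the soliton equation \eqref{elliptic eq} becomes $u^{1-p}\sigma_n=c^{-1}\phi$, the three parameter windows of Theorems~\ref{main1}--\ref{main3} partition $p\in(-n-1,\infty)$ exactly as you list, and the dilation $\hat u=c^{1/(n+1-p)}u_\infty$ (degree $n+1-p$ homogeneity of $u^{1-p}\sigma_n$) removes the constant $c$ when $p\ne n+1$. Your treatment of the borderline $p=n+1$, where the equation is degree-zero homogeneous and uniqueness is only up to dilation, is also what the paper intends, citing Lutwak and Chou--Wang in the proof of Theorem~\ref{main3}.
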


This paper is organised as follows. In Section 2, we recall some properties of convex hypersurfaces. We give the uniform upper bound on $\eta(t)$ to ensure the normalised flow \eqref{NSF} being well-defined, and show that the functional \eqref{functional} is strictly monotone along the flow \eqref{NSF} unless $u$ satisfies the elliptic equation \eqref{elliptic eq}. In Section 3, we establish the a priori estimates, which implies the uniqueness and the long time existence of the normalised flow \eqref{NSF}. In Section 4, we prove Theorems \ref{main1}-\ref{main3}. We also give the proof of the uniqueness of the elliptic equation \eqref{elliptic eq} for the case $1<\alpha<1+n\beta$ in Proposition \ref{alpha}.

\section{Preliminary}
We recall some basic notations at first.
Let $\mathcal{M}$ be a smooth, closed, uniformly convex hypersurface in $R^{n+1}$, enclosing the origin.  Assume that $M$ is parametrized by the inverse Gauss map $X:S^n\to \mathcal{M}\subset R^{n+1}$ and encloses origin.
The radial function $r$ is defined by
\[X=r\xi,\]where $\xi=\frac{X}{|X|}$ is the unit radial vector.
The support function $u:S^n\to R^1$ of $\mathcal{M}$ is defined by
\[u(x)=\sup_{y\in \mathcal{M}}\langle x,y\rangle.\]
The supermum is attained at a point $y=X(x)$, $x$ is the outer normal of $\mathcal{M}$ at $y$. Hence
\[
u(x)=\langle x,X(x)\rangle.
\]
Let $e_1, \cdots, e_n$ be a smooth local orthonormal frame field on $S^n$, and ${\nabla}$ the covariant derivative on $S^n$. Denote by $g_{ij}$, $g^{ij}$, $h_{ij}$ the metric, the inverse of the metric and the second fundamental form of $\mathcal{M}$, respectively. Then the second fundamental form of $\mathcal{M}$ is given by (see e.g.\cite{Urb91})
$$h_{ij}=\nabla_i\nabla_ju+u\delta_{ij}.$$
By the Gauss-Weingarten formula
$$\nabla_ix=h_{jk}g^{kl}\nabla_lX,$$
we get
$$\delta_{ij}=\langle\nabla_ix,\nabla_jx\rangle=h_{ik}g^{kl}h_{jm}g^{ms}\langle\nabla_lX,\nabla_sX\rangle=g^{kl}h_{ik}h_{jl}.$$
Since $M$ is uniformly convex, $h_{ij}$ is invertible. Hence the principal curvature radii are the eigenvalues of the matrix
 \[
 b_{ij}=h^{ik}g_{jk}=h_{ij}=\nabla_{ij}u+u\delta_{ij},
 \]
By a simple calculation (see \cite{LSW16}), we know
\begin{eqnarray}\label{gij}
g_{ij}=r^2\delta_{ij}+r_ir_j,
\end{eqnarray}
\begin{eqnarray}
x=\frac{r\xi-{\nabla}r}{\sqrt{r^2+|{\nabla}r|^2}},
\end{eqnarray}
\begin{eqnarray}\label{hbarij}
{\hbar}_{ij}=\frac{-rr_{ij}+2r_ir_j+r^2\delta_{ij}}{\sqrt{r^2+|{\nabla}r|^2}},
\end{eqnarray}
\begin{eqnarray}\label{nablau}
r=\sqrt {u^2+|\nabla u|^2},
\end{eqnarray}
\begin{eqnarray}\label{nablar}
u=\frac{r^2}{\sqrt{r^2+|{\nabla}r|^2}}.
\end{eqnarray}
Let $\Omega$ be a convex body enclosing the origin, $\partial\Omega=\mathcal{M}$. The dual body of $\Omega$ with respect to the origin, denoted by $\Omega^*$, is defined as
\begin{eqnarray}
\Omega^*=\{y\in R^{n+1}| x\cdot y\le1, \forall x\in\Omega\}.
\end{eqnarray}
Its support function $u^*(\xi,t)=\frac{1}{r(\xi,t)}$, and its radial function $r^*(x,t)=\frac{1}{u(x,t)}$ (see \cite{HLYZ16} for details).

Next we introduce some basic concepts about the Minkowski mixed volume $V_{n+1}(u^1,u^2,\ldots,u^{n+1})$, where $u^1,u^2,\ldots,u^{n+1}$ are the support functions of some convex bodies $\Omega_1,\Omega_2,\ldots,\Omega_{n+1}$ respectively.
Let $\sigma_k(A)$, $1\le k\le n$, be the $k$-th elementary symmetric function defined on the set $\mathcal{M}_n$ of $n\times n$ matrices and $\sigma_k(A_1,\ldots,A_k)$ be the complete polarization of $\sigma_k$ for $A_i\in\mathcal{M}_n$, $i=1,\ldots,k$,
i.e.
\[\sigma_k(A_1,\ldots,A_k)=\frac{1}{k!}\sum_{{i_1,\ldots,i_k=1}
; {j_1,\ldots,j_k=1}}^n\delta_{j_1,\ldots,j_k}^{i_1,\ldots,i_k}{({A}_{1}})_{i_1j_1}\cdots ({A_{k}})_{i_kj_k}.\]
Let $\Gamma_k$ be Garding's cone
\[\Gamma_k=\{A\in\mathcal{M}_n:\sigma_i(A)>0, i=1,\ldots,k\}.\]
For a function $u\in C^2(S^n)$, we denote by $W_u$ the matrix
\[W_u:=\nabla^2u+uI.\]
In the case $W_u$ is positive definite, the eigenvalue of $W_u$ is the principal radii of a strictly convex hypersurface with support function $u$.
Let $u^i\in C^2(S^n)$, $i=1,\ldots,n+1$. Set
\[
V_{n+1}(u^1,u^2,\ldots,u^{n+1}):=\int_{S^n}u^1\sigma_n[W_{u^2},\ldots,W_{u^{n+1}}]dx,
\]
\[
V_{k+1}(u^1,u^2,\ldots,u^{k+1}):=V_{n+1}(u^1,u^2,\ldots,u^{k+1},1,\ldots,1).
\]
Here, we state the well-known Alexandrov-Fenchel inequality.

\begin{lem}\label{AFI}(\cite{H94})
Let $u^i\in C^2(S^n)$, $i=1,2,\ldots,k$ be such that $u^i>0$ and $W_{u^i}\in\Gamma_k$ for $i=1,2,\ldots,k$. Then for any $v\in C^2(S^n)$, the Alexandrov-Fenchel inequality holds:
\[V_{k+1}(v,u^1,\ldots,u^k)^2\ge V_{k+1}(v,v,u^2,\ldots,u^k)V_{k+1}(u^1,u^1,u^2,\ldots,u^k),\]
the equality holds if and only if $v=au^1+\sum_{l=1}^{n+1}a_lx_l$ for some constants $a,a_1,\ldots,a_{n+1}$.
\end{lem}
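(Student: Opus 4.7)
The plan is to derive the inequality from a ``reverse Cauchy--Schwarz'' argument applied to a symmetric bilinear form. Fix $u^2,\ldots,u^k$ satisfying the positivity and cone hypotheses, and define
\[
Q(v,w) := V_{k+1}(v,w,u^2,\ldots,u^k), \qquad v,w \in C^2(S^n).
\]
First I would check that $Q$ is symmetric in $(v,w)$: this comes from iterated integration by parts on $S^n$ together with the divergence-free property of the cofactor tensor of the mixed $\sigma_n$. The symmetry also yields the representation
\[
Q(v,w) = \int_{S^n} v\, Lw\, dx,
\]
where $Lw := \sigma_n[W_w, W_{u^2},\ldots,W_{u^k}, I,\ldots,I]$ is a linear second-order differential operator in $w$. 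The assumption $W_{u^i}\in\Gamma_k$ together with Garding's theorem on hyperbolic polynomials guarantees that the principal coefficient tensor of $L$ is positive definite on symmetric matrices, so $L$ is uniformly elliptic and self-adjoint in $L^2(S^n)$, hence has discrete real spectrum.

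The stated inequality is then equivalent to the claim that $Q$ has exactly one positive eigendirection (spanned by $u^1$), with the $Q$-orthogonal complement non-positive modulo $\ker Q$. Granting this ``hyperbolic signature'', decompose any $v$ as $v = tu^1 + v'$ with $Q(v',u^1)=0$; then
\[
Q(v,u^1)^2 = t^2 Q(u^1,u^1)^2, \qquad Q(v,v) = t^2 Q(u^1,u^1) + Q(v',v') \leq t^2 Q(u^1,u^1),
\]
and multiplying the second inequality by $Q(u^1,u^1)>0$ gives $Q(v,u^1)^2 \geq Q(v,v)\,Q(u^1,u^1)$ at once. Equality forces $Q(v',v')=0$, and an elliptic regularity argument identifies the null cone of $Q$ with $\ker L$, which is precisely the space of restrictions of linear functionals $\sum_{l=1}^{n+1} a_l x_l$ to $S^n$ (corresponding to the translation invariance of mixed volumes). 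This gives the stated equality condition $v = au^1 + \sum a_l x_l$.

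The main obstacle — and the heart of the argument — is proving the hyperbolic signature of $Q$. I would approach this by induction on $k$ combined with a continuity/deformation argument. The base case $k=1$ reduces to the classical Minkowski inequality for mixed volumes, provable directly from Brunn--Minkowski or from a Poincare-type inequality on $S^n$. For the inductive step, one deforms $u^2,\ldots,u^k$ continuously within the open cone $\{W_u\in\Gamma_k\}$ toward the constant function $1$; throughout the deformation, the signature of the associated operator $L$ can change only when an eigenvalue crosses zero, and the inductive hypothesis applied to a $(k-1)$-argument form rules out the production of spurious zero modes outside the translation kernel. At the endpoint $u^2=\cdots=u^k=1$, the operator $L$ becomes a constant-coefficient operator proportional to $\Delta_{S^n} + n\,\mathrm{Id}$, whose spectrum is explicit via spherical harmonics: strictly positive on constants, zero on linear functions, and strictly negative on all higher harmonics. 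This establishes the signature in the model case, and the deformation carries it back to the general setting. The delicate technical point throughout is maintaining Garding-positivity under the cone hypothesis $W_{u^i}\in\Gamma_k$ rather than full positivity, and carefully controlling the zeroth-order contributions to $L$; this is the bulk of the argument in the treatment referenced in \cite{H94}.
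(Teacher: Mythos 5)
First, a point of framing: the paper does not prove this lemma at all — it quotes the Alexandrov--Fenchel inequality as a known result with a citation to H\"ormander's monograph \cite{H94}, so there is no in-paper proof to compare against. What you have written is a sketch of the classical Hilbert--Alexandrov spectral argument, which is in broad outline the approach taken in that reference: package the mixed volume into a symmetric bilinear form $Q(v,w)=V_{k+1}(v,w,u^2,\ldots,u^k)$ represented by a self-adjoint elliptic second-order operator $L$, establish that $Q$ has Lorentzian signature (one positive eigenvalue, all others non-positive, kernel exactly the linear functions), and deduce the inequality as a ``reverse Cauchy--Schwarz.'' Your two-line decomposition $v=tu^1+v'$ deriving the inequality from the signature, the computation that the model operator proportional to $\Delta_{S^n}+n\,I$ has exactly the claimed spectral structure via spherical harmonics, and the observation that the linear functions $x_l$ lie in $\ker L$ since $W_{x_l}=0$, are all correct.

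The substantive gap — which you flag yourself, but which deserves to be underscored because it is the entire content of the theorem — is the signature claim: that for arbitrary $u^2,\ldots,u^k$ with $W_{u^i}\in\Gamma_k$ the operator $L$ has exactly one positive eigenvalue and $\ker L$ consists precisely of the linear functions. Your deformation argument rests on the assertion that ``the inductive hypothesis applied to a $(k-1)$-argument form rules out the production of spurious zero modes,'' but as stated this is not an argument; it is a placeholder for the hardest lemma. What is actually required is a kernel-identification statement: for every admissible $(u^2,\ldots,u^k)$, $Lw=0$ forces $w=\sum a_lx_l$. Without it one cannot conclude that no eigenvalue crosses zero along the deformation path (eigenvalues may touch zero and return, or the translation kernel may fail to remain isolated), and the equality characterization $v=au^1+\sum a_lx_l$ is then unsupported as well. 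Proving that kernel lemma needs a genuine additional idea — a maximum-principle or infinitesimal-rigidity argument exploiting the strict ellipticity guaranteed by $W_{u^i}\in\Gamma_k$, or a precisely formulated use of the $(k-1)$-case inequality that you have not written down. Everything else in your sketch is correct scaffolding around this missing core.
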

We consider the flow \eqref{NSF}. We set
\begin{center}
$\rho=fu^{\alpha-1}\sigma_n^{-\beta}$,
\qquad$\sigma[f]=\sigma_n[W_f,W_u,\ldots,W_u]$.
\end{center}
Then the flow \eqref{NSF} can be written as $\frac{\partial u}{\partial t}=-\rho u+u\eta(t)$, and
 $\mathcal{Z}_p(u(\cdot,t))=\int_{S^n}u\sigma_n(fu^{\alpha-1}\sigma_n^{-\beta})^pdx=\int_{S^n}u\sigma_n\rho^pdx$, where $p\in R^1$, $\eta(t)=\frac{\mathcal{Z}_1}{|S^n|}$,
and $\mathcal{Z}_0(u(\cdot,t))=\int_{S^n} u\sigma_n dx=|S^n|$. By a similar calculation in \cite{And97}, we have
\begin{eqnarray}
&&\frac{d}{dt}\mathcal{Z}_p(u(\cdot,t))\nonumber\\
&=&\int_{S^n}(-\rho u+u\frac{\mathcal{Z}_1}{|S^n|})\sigma_n\rho^{p}dx+\int_{S^n}nu\sigma[-\rho u+u\frac{\mathcal{Z}_1}{|S^n|}]\rho^{p}dx
\nonumber\\
&&+\int_{S^n}p\rho^{p-1}u\sigma_n\Big((\alpha-1)fu^{\alpha-2}(-\rho u+u\frac{\mathcal{Z}_1}{|S^n|})\sigma_n^{-\beta}
-n\beta fu^{\alpha-1}\sigma_n^{-\beta-1}\sigma[-\rho u+u\frac{\mathcal{Z}_1}{|S^n|}]\Big)dx
\nonumber\\
&=&-\mathcal{Z}_{1+p}+\frac{\mathcal{Z}_1\mathcal{Z}_{p}}{|S^n|}-n\int_{S^n}u\sigma[\rho u]\rho^{p}dx+n\frac{\mathcal{Z}_1\mathcal{Z}_{p}}{|S^n|}
\nonumber\\
&&-p(\alpha-1)\mathcal{Z}_{1+p}+p(\alpha-1)\frac{\mathcal{Z}_1\mathcal{Z}_{p}}{|S^n|}+n\beta p\int_{S^n}u\sigma[\rho u]\rho^{p}dx-n\beta p\frac{\mathcal{Z}_1\mathcal{Z}_{p}}{|S^n|}
\nonumber\\
&=&-\big(1+(\alpha-1)p\big)\Big(\mathcal{Z}_{1+p}-\frac{\mathcal{Z}_1\mathcal{Z}_p}{|S^n|}\Big) -n(1-\beta p)\Big(\int_{S^n}u\rho^p\sigma[\rho u]dx-\frac{\mathcal{Z}_1\mathcal{Z}_p}{|S^n|}\Big).
\nonumber\end{eqnarray}
Since $h_{ij}$ satisfies Codazzi equations,
 we have $\sum_i\nabla_i\sigma^{ij}=0$ (\cite{And94(1)}, \cite{And97}), and
\begin{eqnarray*}
\int_{S^n}u\rho^p\sigma_n[\rho u,u,\ldots,u]dx&=&\int_{S^n}u\rho^p\sigma[\rho u]d\mu\\
&=&\int_{S^n}u\rho^p\sigma^{ij}\big(\nabla_i\nabla_j(u\rho)+\delta_{ij} u\rho\big)dx
\\&=&\int_{S^n}u\rho^p\sigma^{ij}(h_{ij}\rho+2\nabla_iu\nabla_j\rho+u\nabla_i\nabla_j\rho)dx
\\&=&\mathcal{Z}_{1+p}-p\int_{S^n}u^2\rho^{p-1}\sigma^{ij}\nabla_i\rho \nabla_j\rho dx
\\&=& \mathcal{Z}_{1+p}-\frac{4p}{(1+p)^2}\int_{S^n}u^2\sigma^{ij}\nabla_i(\rho^{\frac{1+p}{2}})\nabla_j(\rho^{\frac{1+p}{2}})dx.
\end{eqnarray*}
By the Alexandrov-Fenchel inequality in Lemma \ref{AFI}, we have
\begin{eqnarray}\label{AF}
\Big(\int_{S^n}u\psi\sigma_n[u,u,\ldots,u]dx\Big)^2&\ge&\int_{S^n}u\sigma_n[u,u,\ldots,u]dx\int_{S^n}u\psi\sigma_n[u\psi ,u,\ldots,u]dx
\nonumber\\&=&|S^n|\Big(\int_{S^n}u\sigma_n\psi^2dx-\int_{S^n}u^2\sigma^{ij}\nabla_i\psi\nabla_j\psi dx\Big),
\end{eqnarray}
Set $\psi=\rho^{\frac{1+p}{2}}$ in the Alexandrov-Fenchel inequality \eqref{AFI}, we obtain
\[\int_{S^n}u^2\sigma^{ij}\nabla_i\rho^{\frac{1+p}{2}}\nabla_j\rho^{\frac{1+p}{2}} d\mu-\mathcal{Z}_{1+p}+\frac{\mathcal{Z}^2_{\frac{1+p}{2}}}{|S^n|}\ge0.\]
Thus
\begin{eqnarray*}
\frac{d}{dt}\mathcal{Z}_p(u(\cdot,t))&=&-[1+(\alpha-1)p+n(1-p\beta)]\Big(\mathcal{Z}_{p+1}- \frac{\mathcal{Z}_1\mathcal{Z}_p}{|S^n|}\Big)
+\frac{4p n(1-p\beta)}{(1+p)^2}\Big(\mathcal{Z}_{1+p}-\frac{\mathcal{Z}^2_{\frac{1+p}{2}}}{|S^n|}\Big)
\\&{}&+\frac{4p n(1-p\beta)}{(1+p)^2}\Big(\int_{S^n}u^2\sigma^{ij}\nabla_i\rho^{\frac{1+p}{2}}\nabla_j\rho^{\frac{1+p}{2}} dx-\mathcal{Z}_{1+p}+\frac{\mathcal{Z}^2_{\frac{1+p}{2}}}{|S^n|}\Big).
\end{eqnarray*}

\begin{lem}\label{upboundeta}
$\eta(t)$ has a uniform upper bound for the cases $\alpha\ge0$, $\beta>1$; $\alpha>1-n\beta-2\beta$, $\alpha\ne1-\beta$, $0<\beta\le1$ and $\alpha=0$, $\beta=1$, respectively.
\end{lem}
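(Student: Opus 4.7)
The idea is to track $\eta(t) = \mathcal{Z}_1(u(\cdot,t))/|S^n|$ by specializing the derivative identity for $\mathcal{Z}_p$ derived just above at well-chosen values of $p$, and, when needed, passing from $\mathcal{Z}_{1/\beta}$ back to $\mathcal{Z}_1$ via the Jensen inequality $\mathcal{Z}_1\le\mathcal{Z}_{1/\beta}^{\beta}|S^n|^{1-\beta}$ (valid whenever $1/\beta\ge 1$, since $x\mapsto x^{1/\beta}$ is then convex). The case $\alpha=0$, $\beta=1$ is immediate: $\eta(t)=|S^n|^{-1}\int_{S^n}f\,dx$ is independent of $t$. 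For $\alpha\ge 0$, $\beta>1$ I would set $p=1$; since $(1+p)/2=1$, both bracketed quantities in the identity coincide with $A_1:=\mathcal{Z}_2-\mathcal{Z}_1^{2}/|S^n|\ge 0$ and the coefficients telescope to yield
\[
\frac{d}{dt}\mathcal{Z}_1 \;=\; -\alpha\,A_1 \;+\; n(1-\beta)\,\mathrm{AF}_1,
\]
where $\mathrm{AF}_1\ge 0$ is the Alexandrov--Fenchel remainder isolated in the derivation. With $\alpha\ge 0$ and $\beta>1$ both contributions are $\le 0$, so $\mathcal{Z}_1$ is nonincreasing and $\eta(t)\le\eta(0)$.

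For the second cluster ($0<\beta\le 1$, $\alpha\ne 1-\beta$, $\alpha>1-n\beta-2\beta$) I choose $p=1/\beta$: this kills the factor $4pn(1-p\beta)/(1+p)^{2}$ and the identity collapses to
\[
\frac{d}{dt}\mathcal{Z}_{1/\beta} \;=\; -\frac{\alpha+\beta-1}{\beta}\Bigl(\mathcal{Z}_{1+1/\beta}-\frac{\mathcal{Z}_1\mathcal{Z}_{1/\beta}}{|S^n|}\Bigr),
\]
whose bracket is nonnegative by H\"older on the probability measure $|S^n|^{-1}u\sigma_n\,dx$. When $\alpha>1-\beta$ the prefactor is negative, so $\mathcal{Z}_{1/\beta}$ is nonincreasing, and Jensen yields $\eta(t)\le\mathcal{Z}_{1/\beta}(0)^{\beta}|S^n|^{-\beta}$.

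The genuine obstacle is the remaining subregime $1-n\beta-2\beta<\alpha<1-\beta$: there the same computation makes $\mathcal{Z}_{1/\beta}$ \emph{nondecreasing}, so monotonicity of the natural functional yields only a lower bound. To rescue the upper bound I would invoke the origin-symmetry hypothesis from Theorem \ref{main2} together with the Blaschke--Santal\'o inequality. The normalisation $V_{n+1}(u,\ldots,u)=|S^n|/(n+1)$ keeps $V(\Omega_t)$ fixed, and $V(\Omega_t)V(\Omega_t^{\ast})\le V(B_1)^{2}$ combined with $V(\Omega_t^{\ast})=(n+1)^{-1}\int_{S^n}u^{-(n+1)}\,dx$ produces the uniform estimate $\int_{S^n}u^{-(n+1)}\,dx\le C(n)$. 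Writing $\mathcal{Z}_{1/\beta}=\int_{S^n}f^{1/\beta}u^{-\gamma}\,dx$ with $\gamma:=(1-\alpha-\beta)/\beta\in(0,n+1)$, a single H\"older interpolation between $L^{\infty}$ and the Santal\'o bound delivers $\mathcal{Z}_{1/\beta}\le C_1$, and the Jensen inequality of the first paragraph then controls $\eta(t)$. This last regime is the main point of difficulty, since the a priori upper bound on $\eta$ has to be wrung out of a convex-geometric inequality rather than from flow-monotonicity, and origin-symmetry is essential for Blaschke--Santal\'o to apply.
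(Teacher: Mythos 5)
Your proof is correct, and for case~(i) ($p=1$), case~(iii), and the subregime $\alpha>1-\beta$ of case~(ii) it coincides with the paper's argument: the only minor imprecision is the phrase ``both bracketed quantities\ldots coincide with $A_1$,'' since only the first two do, while the third is the Alexandrov--Fenchel remainder; your displayed formula $\tfrac{d}{dt}\mathcal{Z}_1=-\alpha A_1+n(1-\beta)\mathrm{AF}_1$ is nevertheless exactly what the paper computes.

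Where you genuinely diverge from the paper is the subregime $1-n\beta-2\beta<\alpha<1-\beta$. The paper's route is: the reverse monotonicity $\tfrac{d}{dt}\mathcal{Z}_{1/\beta}\ge 0$ gives the \emph{lower} bound $c\le\int_{S^n}u^{q}\,dx$ with $q=(\alpha-1+\beta)/\beta\in(-n-1,0)$; a contradiction argument with the origin-symmetric John ellipsoid, combined with Blaschke--Santal\'o to control $\int_{S_1}(u_{E_j}/(n+1))^{q}$, forces $\max_{S^n}u\le C$; a cube-volume argument then gives $\min_{S^n}u\ge 1/C$; and only then is $\mathcal{Z}_{1/\beta}\le(\max f)^{1/\beta}u_{\min}^{q}|S^n|$ bounded. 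Your route skips the John-ellipsoid contradiction entirely: since the normalisation fixes $\int_{S^n}u\sigma_n\,dx=|S^n|$ (so $\mathrm{Vol}(\Omega_t)$ is fixed), Blaschke--Santal\'o for origin-symmetric bodies gives $\int_{S^n}u^{-(n+1)}\,dx\le C(n)$ directly, and H\"older with exponents $(n+1)/\gamma$ and $(n+1)/(n+1-\gamma)$ (not ``$L^\infty$,'' as you wrote, but $L^1$ of the constant function against $L^{(n+1)/\gamma}$ of $u^{-\gamma}$) yields $\int_{S^n}u^{-\gamma}\,dx\le C$, hence $\mathcal{Z}_{1/\beta}\le C$; Jensen (equivalently the paper's H\"older step $\mathcal{Z}_1\le\mathcal{Z}_0^{1-\beta}\mathcal{Z}_{1/\beta}^{\beta}$) finishes. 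This is a cleaner and shorter argument for the stated lemma, and it does not even use the nondecreasing monotonicity of $\mathcal{Z}_{1/\beta}$. The trade-off is that the paper's longer contradiction argument also delivers the two-sided $C^0$ estimate $1/C\le u\le C$ in this subregime as a by-product, and the paper explicitly relies on that right after the lemma (it is cited in the proof of Lemma~\ref{C0estimate1}); your argument bounds $\eta(t)$ but does not by itself give the $C^0$ estimates, so those would still have to be established separately.
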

\begin{proof}
Let $p=1$, we have
\begin{eqnarray*}
\frac{d}{dt}\mathcal{Z}_1
&=&-\alpha\big(\mathcal{Z}_2-\frac{\mathcal{Z}^2_1}{|S^n|}\big)+n(1-\beta)\big(\int_{S^n}u^2\sigma^{ij}\nabla_i\rho\nabla_j\rho dx-\mathcal{Z}_{2}+\frac{\mathcal{Z}^2_{1}}{|S^n|}\big)
\end{eqnarray*}
where the H\"{o}lder inequality shows that $\mathcal{Z}_{2}\ge\frac{\mathcal{Z}^2_{1}}{|S^n|}$.

Case (i): $\alpha\ge0$, $\beta>1$, we obtain $\frac{d}{dt}\eta=\frac{\frac{d}{dt}\mathcal{Z}_1}{|S^n|}\le0$,  then $\eta(t)\le C$, where $C$ depending on the initial hypersuface.

Case (ii): $\alpha>1-n\beta-2\beta$, $\alpha\ne1-\beta$, $0<\beta\le1$,  we have
$\mathcal{Z}_{1}(u)\le\Big(\mathcal{Z}_{0}(u)\Big)^{1-\beta}\Big(\mathcal{Z}_{\frac{1}{\beta}}(u)\Big)^\beta$ by the H\"{o}lder inequality. Hence we only need to prove that $\mathcal{Z}_{\frac{1}{\beta}}(u)\le C$, for some positive constant $C$.  Let $p=\frac{1}{\beta}$, we have
\begin{eqnarray*}
\frac{d}{dt}\mathcal{Z}_{\frac{1}{\beta}}(u)&=&
\frac{1-\alpha-\beta}{\beta}\Big(\mathcal{Z}_{1+\frac{1}{\beta}}-\frac{\mathcal{Z}_1\mathcal{Z}_{\frac{1}{\beta}}}{|S^n|}\Big).
\end{eqnarray*}
For $\alpha>1-\beta$, we have $\frac{d}{dt}\mathcal{Z}_{\frac{1}{\beta}}(u)\le0$ since $\mathcal{Z}_{1+\frac{1}{\beta}}\ge\frac{\mathcal{Z}_1\mathcal{Z}_{\frac{1}{\beta}}}{|S^n|}$ by the H\"{o}lder inequality. Then $\mathcal{Z}_{\frac{1}{\beta}}(u)\le C$, and $\eta(t)\le C$, where $C$ depends on the initial hypersuface.
For $1-n\beta-2\beta<\alpha<1-\beta$,
$\frac{d}{dt}\mathcal{Z}_{\frac{1}{\beta}}(u)\ge0$
since $\mathcal{Z}_{1+\frac{1}{\beta}}\ge\frac{\mathcal{Z}_1\mathcal{Z}_{\frac{1}{\beta}}}{|S^n|}$ by the H\"{o}lder inequality. Hence  $$\mathcal{Z}_{\frac{1}{\beta}}(u_0)\le\mathcal{Z}_{\frac{1}{\beta}}(u)=\int_{S^n}f^{\frac{1}{\beta}}u^{\frac{\alpha-1+\beta}{\beta}} dx\le(\mathop{\max}_{S^n}f)^\frac{1}{\beta}\int_{S^n}u^{\frac{\alpha-1+\beta}{\beta}} dx,$$
that is
\begin{equation}\label{intbound}
c=\mathcal{Z}_{\frac{1}{\beta}}(u_0)(\mathop{\max}_{S^n}f)^{-\frac{1}{\beta}}\le\int_{S^n}u^{\frac{\alpha-1+\beta}{\beta}} dx.
\end{equation}
In this part, we shall use the Blaschke-Santal\'{o} inequality
\[\mathrm{Vol}(\Omega)\mathrm{Vol}(\Omega^*)\le\mathrm{Vol}(B_1)^2,\]
where $\Omega$ is the convex body enclosing the origin, $\Omega^*$ is the polar body of $\Omega$, $\mathrm{Vol}(\Omega)=\int_{S^n}r^{n+1}d\xi$, $\mathrm{Vol}(\Omega^*)=\int_{S^n}{r^*}^{n+1}dx=\int_{S^n}(\frac{1}{u})^{n+1}dx$, the equality holds if and only if $\Omega$ is a ellipsoid.

Set $q=\frac{\alpha-1+\beta}{\beta}$, $-n-1<q<0$, we refer to the result of Chou-Wang\cite{ChWang06}: If origin-symmetric convex body $\Omega$ satisfies $c\le\int_{S^n}u^{q} dx$, $q<0$, $\mathrm{Vol(\Omega)}=\int_{S^n}u\sigma_n dx=|S^n|$, then the diameter of convex body $\Omega$ enclosed by $\mathcal{M}$, $d(\Omega)\le C$, for some positive $C$, where $d(\Omega)=2\mathop{\max}_{S^n}u$ for the origin-symmetric convex body $\Omega$. We give the same argument as follows. Suppose there is a sequence origin-symmetric convex body $\Omega_{t_j}$ satisfying \eqref{intbound}, but the diameter of $\Omega_{t_j}$, $d_j\to \infty$ as $t_j\to T$. Let $\frac{E_{t_j}}{n+1}$ be the origin-symmetric John ellipsoid associated with $\Omega_{t_j}$, as is well known, see \cite{Sch14}, $\frac{E_{t_j}}{n+1}\subset \Omega_{t_j}\subset E_{t_j}$, $\frac{u_{E_j}}{n+1}<u_j<u_{E_j}$. we set $S^n=S_1\cup S_2\cup S_3$, where
\[S_1=S^n\cap\{u_{E_j}<\delta\},\qquad S_2=S^n\cap\{\delta\le u_{E_j}<\frac{1}{\delta}\}, \qquad S_3=S^n\cap\{u_{E_j}\ge\frac{1}{\delta}\}.\]
where $\delta\in(0,\frac{1}{4})$ is a fixed constant. Then
\[c\le\int_{S^n}u_{j}^{q}dx<\int_{S^n}(\frac{u_{E_j}}{n+1})^{q}dx.\]
Suppose $u_j$ attains the maximum at $x_0$, where $x_0\in S^n$, that is, $u_j(x_0)=\mathop{\max}_{S^n}u_j$, and $\mathop{\max}_{S^n}u_j=\mathop{\max}_{S^n}r_j$ by \eqref{nablau}. Since $u_j(y)\ge\frac{1}{2}d_j|x_0\cdot y|$ for any $y\in S^n$, we obtain $|S_1|$, $|S_2|\to 0$ as $d_j\to\infty$.

As $d_j\to\infty$, for any fixed $\delta$, we have
\[\int_{S_1}(\frac{u_{E_j}}{n+1})^{q}dx\le(\frac{1}{n+1})^q\Big(\int_{S^n} \frac{1}{u^{n+1}_{E_j}}\Big)^\frac{-q}{n+1}|S_1|^{\frac{q+n+1}{n+1}}\le C_1|S_1|^{\frac{q+n+1}{n+1}}\to 0,\]by the Blaschke-Santal\'{o} inequality.
Noting $|S_2|\to0$ as $d_j\to\infty$, and
\[\int_{S_3}(\frac{u_{E_j}}{n+1})^qdx\le\int_{S_3}\big(\frac{1}{\big(n+1)\delta}\big)^qdx=\big(\frac{1}{(n+1)\delta}\big)^q|S_3|\le C_2\delta^{-q}.\]
Hence, we have \[c\le\circ(1)+C_3\delta^{-q}.\]
for any $\delta\in(0,\frac{1}{4})$. Let $\delta\to0$, we reach a contradiction. It implies ${\mathop{\max}_{S^n}}u(\cdot,t)\le C$, for some positive constant $C$.

Next we derive the lower bound for $u(\cdot,t)$. It is well known that \[\int_{S^n}u(x)\sigma_ndx=\int_{S^n}r^{n+1}(\xi)d\xi=\mathrm{Vol}(\Omega_t),\]
where $\Omega_t$ denotes the convex body enclosed by $\mathcal{M}_t$. By \eqref{nablau}, it is easy to see $r_{max}(t)=u_{max}(t)$, $r_{min}(t)=u_{min}(t)$.
We may assume that $r_{\max}(t)=\mathop{\max}_{S^n}r(e_1,t)$ and $r_{min}(t)=r(e_{n+1},t)$ by rotating the coordinates. Since $\Omega_t$ is origin-symmetric, we find that $\Omega_t$ is contained in a cube
\[
Q_t=\{z\in R^{n+1}: -r_{max}(t)\le z_i\le r_{max}(t)\, {\rm{for}}\,    1\le i\le n, -r_{min}(t)\le z_{n+1}\le r_{min}(t) \}.\]
Therefore
\[|S^n|=\mathrm{Vol}(\Omega_t)\le 2^{n+1}r^{n}_{max}(t)r_{min}(t)\]
Using $r_{max}(t)\le C$, we get $r_{min}(t)\ge\frac{1}{C}$ for some positive constant $C$, then\\ $\mathcal{Z}_{\frac{1}{\beta}}=\int_{S^n}f^{\frac{1}{\beta}}u^{\frac{\alpha-1+\beta}{\beta}} dx\le(\mathop{\max}_{S^n}f)^{\frac{1}{\beta}}{u^{\frac{\alpha-1+\beta}{\beta}} _{min}} |S^n|\le C$. Hence $\mathcal{Z}_1\le C$, for some positive constant $C$.

Case (iii): $\alpha=0$, $\beta=1$, we obtain $\eta(t)=\frac{\int_{S^n}f dx}{|S^n|}=c$, where $c$ is a positive constant.
\end{proof}

In Case (ii) of the proof, we have obtained the  $C^0$ estimates of the solutions to the equation \eqref{NSF}:  $\frac{1}{C}\le u\le C$ for the case $1-n\beta-2\beta<\alpha<1-\beta$ for some positive constant $C$.

When $\alpha>1+n\beta$, $\beta>0$, we also need the uniform lower bound on $\eta(t)$ to obtain the priori estimate in the next section.
\begin{lem}\label{boundeta}
Suppose $\alpha>1+n\beta$, $\beta>0$, $\eta(t)$ is uniformly bounded.
\end{lem}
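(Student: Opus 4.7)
The upper bound on $\eta(t)$ is already covered by Lemma \ref{upboundeta}: under $\alpha>1+n\beta$, $\beta>0$, we are either in Case (i) (when $\beta>1$, since then $\alpha>n\ge 0$) or in the subcase $\alpha>1-\beta$ of Case (ii) (when $0<\beta\le 1$), and both yield $\eta(t)\le C$. So my plan is to focus entirely on producing a uniform positive lower bound.

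The strategy is to pick a single negative exponent for which $\mathcal{Z}_p$ is nonincreasing along the flow, and then convert that into a lower bound on $\eta$ via Jensen's inequality. The natural choice is
\[
p^{*}:=-\frac{n+1}{\alpha-1-n\beta}<0,
\]
namely the unique value making the coefficient $1+(\alpha-1)p^{*}+n(1-p^{*}\beta)=1+n+p^{*}(\alpha-1-n\beta)$ in the derived formula for $\frac{d}{dt}\mathcal{Z}_p$ vanish, killing the ``$A$'' term. For the two remaining terms the common prefactor $\frac{4p^{*}n(1-p^{*}\beta)}{(1+p^{*})^2}$ is strictly negative (since $p^{*}<0$ and $1-p^{*}\beta>0$), while the ``$B$''-bracket $\mathcal{Z}_{1+p^{*}}-\mathcal{Z}_{(1+p^{*})/2}^2/|S^n|$ is nonnegative by Cauchy--Schwarz and the ``$C$''-bracket is nonnegative by the Alexandrov--Fenchel inequality (Lemma \ref{AFI}) applied to $\psi=\rho^{(1+p^{*})/2}$. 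Hence $\frac{d}{dt}\mathcal{Z}_{p^{*}}\le 0$, so $\mathcal{Z}_{p^{*}}(t)\le\mathcal{Z}_{p^{*}}(u_0)=:C_0$ along the flow.

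To finish, I would apply Jensen's inequality to the convex function $y\mapsto y^{p^{*}}$ on $(0,\infty)$ (convex because $p^{*}<0$) with respect to the probability measure $u\sigma_n\,dx/|S^n|$, which has total mass $1$ by \eqref{equality}:
\[
\eta^{p^{*}}=\Bigl(\tfrac{1}{|S^n|}\int_{S^n}\rho\,u\sigma_n\,dx\Bigr)^{p^{*}}\le\tfrac{1}{|S^n|}\int_{S^n}\rho^{p^{*}}\,u\sigma_n\,dx=\tfrac{\mathcal{Z}_{p^{*}}}{|S^n|}\le\tfrac{C_0}{|S^n|}.
\]
Since $p^{*}<0$ the map $y\mapsto y^{1/p^{*}}$ is decreasing on $(0,\infty)$, so taking the $1/p^{*}$-th power flips the inequality and gives $\eta(t)\ge(|S^n|/C_0)^{(\alpha-1-n\beta)/(n+1)}>0$, the desired uniform lower bound.

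The main obstacle I anticipate is the degenerate case $\alpha=n+2+n\beta$, for which $p^{*}=-1$ and the factor $\frac{4p}{(1+p)^2}$ in the derivation of $\frac{d}{dt}\mathcal{Z}_p$ is formally singular. To handle it I would simply replace $p^{*}$ by any $p<-1$: a direct sign check gives $1+(\alpha-1)p+n(1-p\beta)<0$, while by the Chebyshev (covariance) inequality $\mathcal{Z}_{p+1}-\mathcal{Z}_1\mathcal{Z}_p/|S^n|\le 0$ for $p<0$, so the $A$-term is again nonpositive; the $B$ and $C$ terms remain nonpositive by the same reasoning as above. Hence $\mathcal{Z}_p$ is still nonincreasing and the Jensen argument produces the same type of uniform positive lower bound for $\eta(t)$ with $p^{*}$ replaced by $p$.
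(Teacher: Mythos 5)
Your proposal is correct and follows essentially the same route as the paper: both pick a negative exponent $p$ at or below $\tfrac{n+1}{1+n\beta-\alpha}$, show $\mathcal{Z}_p$ is nonincreasing along the flow, and then combine the resulting bound on $\mathcal{Z}_p$ with $\mathcal{Z}_0=|S^n|$ to extract a lower bound on $\mathcal{Z}_1=\eta|S^n|$. The paper phrases the final step as a H\"older interpolation of $\mathcal{Z}_0$ between $\mathcal{Z}_1$ and $\mathcal{Z}_\theta$ rather than Jensen applied to $y\mapsto y^{p}$, which is the same estimate, and it side-steps the $p=-1$ degeneracy you carefully address by simply allowing any $\theta\le\tfrac{n+1}{1+n\beta-\alpha}$.
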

\begin{proof}
Since $\alpha>1+n\beta$, $\beta>0$, we set $\theta\le\frac{1+n}{1+n\beta-\alpha}<0$, $\alpha>1+n\beta$,  we have
\begin{eqnarray*}
\frac{d}{dt}\mathcal{Z}_\theta(u)&=&-[1+(\alpha-1)\theta+n(1-\theta\beta)]\Big(\mathcal{Z}_{\theta+1}- \frac{\mathcal{Z}_1\mathcal{Z}_\theta}{|S^n|}\Big)
+\frac{4\theta n(1-\theta\beta)}{(1+\theta)^2}\Big(\mathcal{Z}_{1+\theta}-\frac{\mathcal{Z}^2_{\frac{1+\theta}{2}}}{|S^n|}\Big)
\\&{}&+\frac{4\theta n(1-\theta\beta)}{(1+\theta)^2}\Big(\int_{S^n}u^2\sigma^{ij}\nabla_i\rho^{\frac{1+\theta}{2}}\nabla_j\rho^{\frac{1+\theta}{2}} dx-\mathcal{Z}_{1+\theta}+\frac{\mathcal{Z}^2_{\frac{1+\theta}{2}}}{|S^n|}\Big)
\\&\le&0
\end{eqnarray*}
since $\theta\le\frac{1+n}{1+n\beta-\alpha}<0$, and by the H\"{o}lder inequality, we get
$\mathcal{Z}_{1+\theta}\le\frac{\mathcal{Z}_1\mathcal{Z}_{\theta}}{|S^n|}$ and
$\mathcal{Z}_{1+\theta}\ge\frac{\mathcal{Z}^2_\frac{1+\theta}{2}}{|S^n|}$.
Hence, $\mathcal{Z}_\theta(u)\le\mathcal{Z}_\theta(u_0)$.
By the H\"{o}lder inequality again, we have
\[
|S^n|=\int_{S^n}u\sigma_ndx\le\big(\int_{S^n}fu^\alpha\sigma_n^{1-\beta}dx\big)^{\frac{-\theta}{1-\theta}}\big(\int_{S^n}u\sigma_n(fu^{\alpha-1}\sigma_n^{-\beta})^\theta dx\big)^{\frac{1}{1-\theta}}=\mathcal{Z}^{\frac{-\theta}{1-\theta}}_1\mathcal{Z}^{\frac{1}{1-\theta}}_\theta.
\]
It is easy to see, $\mathcal{Z}_{1}\ge C$, by case(i) and case(ii) in Lemma \ref{upboundeta}, we get the uniform bound on $\eta(t)$ for $\alpha>1+n\beta$, $\beta>0$.
\end{proof}

\begin{lem}\label{monotone12}
The functional \eqref{functional} is non-increasing along the normalised flow \eqref{NSF} for the case $\alpha>1-\beta$, $\beta>0$,
  and the equality holds if and only if $\mathcal{M}_t$ satisfies the elliptic equation \eqref{elliptic eq}.
\end{lem}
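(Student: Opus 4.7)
The plan is to specialize the general differentiation formula for $\frac{d}{dt}\mathcal{Z}_p(u(\cdot,t))$ derived just before Lemma \ref{upboundeta} to the distinguished exponent $p=1/\beta$. This is the unique value for which the factor $n(1-p\beta)$ vanishes, so the three terms containing it (including both gradient terms) disappear simultaneously, leaving a single algebraic term amenable to a H\"older-type estimate. Since $\alpha>1-\beta$ with $\beta>0$ places us in the first branch of \eqref{functional}, $\mathcal{J}(u(\cdot,t))=\mathcal{Z}_{1/\beta}(u(\cdot,t))$, and this specialization is precisely what is needed.

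Plugging $p=1/\beta$ into the master formula gives, after collecting the surviving prefactor,
\[
\frac{d}{dt}\mathcal{Z}_{1/\beta}(u) \;=\; -\Big(1+\tfrac{\alpha-1}{\beta}\Big)\Big(\mathcal{Z}_{1+1/\beta}-\tfrac{\mathcal{Z}_1\mathcal{Z}_{1/\beta}}{|S^n|}\Big) \;=\; \tfrac{1-\alpha-\beta}{\beta}\Big(\mathcal{Z}_{1+1/\beta}-\tfrac{\mathcal{Z}_1\mathcal{Z}_{1/\beta}}{|S^n|}\Big),
\]
exactly as already noted in the proof of Lemma \ref{upboundeta}. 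Under the hypothesis $\alpha>1-\beta$, $\beta>0$, the prefactor $(1-\alpha-\beta)/\beta$ is strictly negative, so monotonicity is reduced to showing that the bracket is non-negative.

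For this, the key observation is that by \eqref{equality} the measure $d\mu:=u\,\sigma_n\,dx$ is a positive measure on $S^n$ with total mass $|S^n|$. Since $\rho=fu^{\alpha-1}\sigma_n^{-\beta}$ is non-negative and $\rho$ and $\rho^{1/\beta}$ are similarly ordered, Chebyshev's sum inequality (equivalently Cauchy--Schwarz applied to $\rho^{1/2}$ and $\rho^{1/2+1/\beta}$ against $d\mu$) gives
\[
\mathcal{Z}_1\cdot\mathcal{Z}_{1/\beta} \;=\; \int_{S^n}\rho\,d\mu\int_{S^n}\rho^{1/\beta}\,d\mu \;\le\; |S^n|\int_{S^n}\rho^{1+1/\beta}\,d\mu \;=\; |S^n|\,\mathcal{Z}_{1+1/\beta},
\]
so the bracket is non-negative and combining with the negative prefactor yields $\frac{d}{dt}\mathcal{J}(u(\cdot,t))\le 0$.

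The equality case in Chebyshev/Cauchy--Schwarz forces $\rho$ to be $\mu$-a.e.\ constant on $S^n$, hence by smoothness constant everywhere; its constant value must then coincide with the average $\mathcal{Z}_1/|S^n|=\eta(t)$, which is exactly \eqref{ellipic t}, i.e.\ \eqref{elliptic eq} with $c=\eta(t)$. I do not anticipate a serious obstacle: the geometric content is entirely absorbed into the identification of the ``magic'' exponent $p=1/\beta$ in the pre-computed formula, and what remains is a one-line H\"older/Chebyshev inequality together with a standard equality-case analysis; the only point that needs careful bookkeeping is the sign of the surviving prefactor $(1-\alpha-\beta)/\beta$ in the regime $\alpha>1-\beta$.
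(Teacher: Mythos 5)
Your argument reproduces the paper's proof: specialize the master identity to $p=1/\beta$ so the $n(1-p\beta)$ terms drop out, observe that the surviving prefactor $(1-\alpha-\beta)/\beta$ is negative when $\alpha>1-\beta$, and then establish $\mathcal{Z}_1\mathcal{Z}_{1/\beta}\le|S^n|\,\mathcal{Z}_{1+1/\beta}$ by a moment inequality for the probability measure $u\sigma_n\,dx/|S^n|$ (the paper uses two applications of H\"older, you use Chebyshev for the comonotone pair $\rho,\rho^{1/\beta}$; both are valid and the equality case gives $\rho\equiv c(t)=\eta(t)$). One small slip: your parenthetical ``equivalently Cauchy--Schwarz applied to $\rho^{1/2}$ and $\rho^{1/2+1/\beta}$'' is not in fact equivalent --- that application of Cauchy--Schwarz yields $\mathcal{Z}_{1+1/\beta}^2\le\mathcal{Z}_1\mathcal{Z}_{1+2/\beta}$, a different inequality; you should either rely on Chebyshev alone or replace the parenthetical with the log-convexity of $p\mapsto\log\mathcal{Z}_p$ (i.e.\ the H\"older interpolation $\mathcal{Z}_1\le|S^n|^{1/(1+\beta)}\mathcal{Z}_{1+1/\beta}^{\beta/(1+\beta)}$ and $\mathcal{Z}_{1/\beta}\le|S^n|^{\beta/(1+\beta)}\mathcal{Z}_{1+1/\beta}^{1/(1+\beta)}$), which multiply to the needed bound.
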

\begin{proof}
From the above calculation process, when $p=\frac{1}{\beta}$, we obtain along the normalised flow \eqref{NSF}
\begin{eqnarray*}
\frac{d}{dt}\mathcal{J}(u)=\frac{d}{dt}\mathcal{Z}_{\frac{1}{\beta}}(u)&=&
\frac{1-\alpha-\beta}{\beta}\Big(\mathcal{Z}_{1+\frac{1}{\beta}}-\frac{\mathcal{Z}_1\mathcal{Z}_{\frac{1}{\beta}}}{|S^n|}\Big)
\\&\le&0.
\end{eqnarray*}
The last inequality holds from the H\"{o}lder inequality,
and the equality holds if and only if $fu^{\alpha-1}\sigma_n^{-\beta}=c(t)$ for some function $c(t)$. Indeed, by \eqref{def eta t}, if $fu^{\alpha-1}\sigma_n^{-\beta}=c(t)$ occurs, then
\begin{eqnarray*}
\eta(t)=\frac{\int_{S^n}fu^\alpha\sigma_n^{1-\beta}dx}{|S^n|}=\frac{\int_{S^n}u\sigma_nc(t)dx}{|S^n|}=c(t).
\end{eqnarray*}
\end{proof}
\begin{lem}\label{monotone3}
The functional \eqref{functional} is non-decreasing along the normalised flow \eqref{NSF} for the case $1-n\beta-2\beta<\alpha<1-\beta$,
 and the equality holds if and only if $\mathcal{M}_t$ satisfies the elliptic equation \eqref{elliptic eq}.
\end{lem}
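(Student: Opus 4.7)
The plan is to mirror the computation carried out in Lemma \ref{monotone12}, exploiting the fact that the only thing that changes between the two lemmas is the sign of the prefactor $\tfrac{1-\alpha-\beta}{\beta}$. Concretely, I would begin by plugging $p = 1/\beta$ into the general identity
\[
\frac{d}{dt}\mathcal{Z}_p(u) = -\bigl[1+(\alpha-1)p+n(1-p\beta)\bigr]\!\Bigl(\mathcal{Z}_{p+1}-\tfrac{\mathcal{Z}_1\mathcal{Z}_p}{|S^n|}\Bigr) + \tfrac{4pn(1-p\beta)}{(1+p)^2}(\cdots)
\]
derived before the statement of Lemma \ref{upboundeta}. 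The $p = 1/\beta$ choice annihilates both the $(1-p\beta)$ factors, leaving the clean identity
\[
\frac{d}{dt}\mathcal{Z}_{1/\beta}(u) \;=\; \frac{1-\alpha-\beta}{\beta}\Bigl(\mathcal{Z}_{1+\tfrac{1}{\beta}} - \tfrac{\mathcal{Z}_1\mathcal{Z}_{1/\beta}}{|S^n|}\Bigr),
\]
exactly as used in Lemma \ref{monotone12}.

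Next I would observe that in the range $1-n\beta-2\beta < \alpha < 1-\beta$ with $\beta > 0$, the prefactor now satisfies $\tfrac{1-\alpha-\beta}{\beta} > 0$, whereas in Lemma \ref{monotone12} it was negative. Meanwhile the parenthesized factor $\mathcal{Z}_{1+1/\beta} - \mathcal{Z}_1\mathcal{Z}_{1/\beta}/|S^n|$ is always non-negative: this is precisely the H\"older inequality for the positive measure $u\sigma_n\,dx$ applied to the function $\rho = fu^{\alpha-1}\sigma_n^{-\beta}$, and is the same inequality quoted in Lemma \ref{upboundeta}, case (ii). Together these give $\tfrac{d}{dt}\mathcal{J}(u) = \tfrac{d}{dt}\mathcal{Z}_{1/\beta}(u) \ge 0$, which is the desired monotonicity.

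For the rigidity statement I would argue exactly as in Lemma \ref{monotone12}: equality in the H\"older inequality forces $\rho = fu^{\alpha-1}\sigma_n^{-\beta}$ to be a function only of $t$, say $c(t)$, and then inserting this relation into the definition \eqref{def eta t} of $\eta(t)$ yields
\[
\eta(t) = \frac{\int_{S^n} u\sigma_n \, c(t)\, dx}{|S^n|} = c(t),
\]
so $c(t) = \eta(t)$ and $u$ solves the elliptic equation \eqref{elliptic eq} at this instant. Conversely, if $u$ satisfies \eqref{elliptic eq} then $\rho$ is constant and the H\"older inequality is an equality, closing the iff.

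I do not expect any real obstacle: the heavy lifting (the differentiation of $\mathcal{Z}_p$ and the Alexandrov--Fenchel manipulations) has already been done in the paragraphs preceding Lemma \ref{upboundeta}, and the only genuinely new input is the sign check $1-\alpha-\beta > 0$ coming from the hypothesis $\alpha < 1-\beta$. The lower bound $\alpha > 1-n\beta-2\beta$ does not enter the monotonicity argument itself; it is needed elsewhere (for the $C^0$ bounds in Lemma \ref{upboundeta}) and simply delimits the regime in which $\mathcal{J}$ is the relevant functional.
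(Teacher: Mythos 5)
Your proposal is correct and follows the paper's own argument essentially verbatim: substitute $p=1/\beta$ to kill the $(1-p\beta)$ terms, observe that $\alpha<1-\beta$ makes the prefactor $\tfrac{1-\alpha-\beta}{\beta}$ positive, apply the H\"older inequality to conclude $\tfrac{d}{dt}\mathcal{Z}_{1/\beta}\ge0$, and handle the equality case by showing $c(t)=\eta(t)$ as in Lemma \ref{monotone12}. Your remark that the lower bound $\alpha>1-n\beta-2\beta$ plays no role in the monotonicity itself but only in the $C^0$ estimates is also accurate.
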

\begin{proof}
From the above calculation, when $p=\frac{1}{\beta}$, we obtain along the normalised flow \eqref{NSF}
\begin{eqnarray*}
\frac{d}{dt}\mathcal{J}(u)=\frac{d}{dt}\mathcal{Z}_{\frac{1}{\beta}}(u)&=&
\frac{1-\alpha-\beta}{\beta}\Big(\mathcal{Z}_{1+\frac{1}{\beta}}-\frac{\mathcal{Z}_1\mathcal{Z}_{\frac{1}{\beta}}}{|S^n|}\Big)
\\&\ge&0.
\end{eqnarray*}
The last inequality holds from the H\"{o}lder inequality,
and the equality holds if and only if $fu^{\alpha-1}\sigma_n^{-\beta}=c(t)$ for some function $c(t)$. In the same way as in the proof of Lemma \ref{monotone12}, we can show  $\eta(t)=c(t)$.
\end{proof}
For $\alpha=0$, $\beta=1$, it it easy to see, $\eta(t)=\frac{\int_{S^n}fdx}{|S^n|}=c$, where $c$ is a positive constant.
\begin{lem}\label{monotone4}
The functional \eqref{functional} is non-increasing along the normalised flow \eqref{NSF} for $\alpha=0$, $\beta=1$, and the equality holds if and only if $\mathcal{M}_t$ satisfies the elliptic equation \eqref{elliptic eq}.
\end{lem}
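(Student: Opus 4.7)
Specialising to $\alpha=0,\beta=1$, the normalised flow reads
\[
\partial_t u=-f\sigma_n^{-1}+\eta(t)u,\qquad \eta(t)=\frac{\int_{S^n}f\,dx}{|S^n|},
\]
and the functional becomes
\[
\mathcal{J}(u)=\frac{\int_{S^n}f\log u\,dx}{\int_{S^n}f\,dx}-\frac{1}{n+1}\log\int_{S^n}u\sigma_n\,dx.
\]
My plan is to differentiate term by term and reduce the result to a single Cauchy--Schwarz inequality on $S^n$.

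First I would verify that the denominator of the second term is actually constant along the flow. Using the divergence-free identity $\sum_i\nabla_i\sigma^{ij}=0$ recalled above, integration by parts yields $\frac{d}{dt}\int_{S^n}u\sigma_n\,dx=(n+1)\int_{S^n}u_t\sigma_n\,dx$, and substituting the flow equation gives
\[
\frac{d}{dt}\int_{S^n}u\sigma_n\,dx=(n+1)\Bigl(-\int_{S^n}f\,dx+\eta(t)\int_{S^n}u\sigma_n\,dx\Bigr)=0,
\]
since $\int u\sigma_n\,dx=|S^n|$ is precisely the identity $\eqref{equality}$ normalisation. Thus the second term of $\mathcal{J}$ contributes nothing to $\frac{d}{dt}\mathcal{J}$.

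For the first term I compute
\[
\frac{d}{dt}\mathcal{J}(u)=\frac{1}{\int_{S^n}f\,dx}\int_{S^n}\frac{f}{u}\bigl(-f\sigma_n^{-1}+\eta(t)u\bigr)dx
=\frac{1}{\int_{S^n}f\,dx}\Bigl(-\int_{S^n}\frac{f^2}{u\sigma_n}\,dx+\frac{(\int_{S^n}f\,dx)^2}{|S^n|}\Bigr).
\]
By the Cauchy--Schwarz inequality,
\[
\Bigl(\int_{S^n}f\,dx\Bigr)^2=\Bigl(\int_{S^n}\frac{f}{\sqrt{u\sigma_n}}\sqrt{u\sigma_n}\,dx\Bigr)^2
\le \int_{S^n}\frac{f^2}{u\sigma_n}\,dx\cdot\int_{S^n}u\sigma_n\,dx=|S^n|\int_{S^n}\frac{f^2}{u\sigma_n}\,dx,
\]
so $\frac{d}{dt}\mathcal{J}(u)\le 0$, as required.

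Equality in Cauchy--Schwarz forces $f/\sqrt{u\sigma_n}=\lambda\sqrt{u\sigma_n}$ on $S^n$ for some constant $\lambda$, i.e.\ $f=\lambda\,u\sigma_n$. Taking the $S^n$-average against $u\sigma_n$ and using $\int u\sigma_n=|S^n|$ identifies $\lambda=\int_{S^n}f\,dx/|S^n|=\eta(t)=c$, so the equality case is exactly $fu^{-1}\sigma_n^{-1}=c$, which is \eqref{elliptic eq} with $(\alpha,\beta)=(0,1)$. No step poses a serious obstacle; the only point requiring care is the volume-preservation identity, which hinges on the precise form of $\eta(t)$ built into the normalisation.
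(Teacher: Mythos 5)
Your proof is correct and follows essentially the same route as the paper: differentiate $\mathcal{J}$, substitute the flow equation, and identify the result as non-positive with equality characterising \eqref{elliptic eq}. The only cosmetic difference is that the paper collects the derivative as the integral of the manifestly non-positive quantity $-u^{-1}\sigma_n\bigl(f\sigma_n^{-1}-\eta u\bigr)^2$, whereas you first note that volume-preservation makes the second term of $\mathcal{J}$ constant and then apply Cauchy--Schwarz to $\int_{S^n}\frac{f^2}{u\sigma_n}\,dx \cdot \int_{S^n}u\sigma_n\,dx \ge \bigl(\int_{S^n}f\,dx\bigr)^2$; expanding the paper's square shows the two computations are literally the same inequality, so this is a presentational variation rather than a different argument.
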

\begin{proof}
\begin{eqnarray*}
\frac{d}{dt}\mathcal{J}(u)&=&\frac{\int_{S^n}fu^{-1}u_tdx-\frac{\int_{S^n}f dx}{\int_{S^n}u\sigma_n dx}{\int_{S^n}u_t\sigma_n dx}}{\int_{S^n}f dx}
\\&=&\frac{\int_{S^n}u_t\big(fu^{-1}-\eta\sigma_n\big)dx}{\int_{S^n}f dx}
\\&=&\frac{-\int_{S^n}u^{-1}\sigma_n\big(f\sigma_n^{-1}-\eta u\big)^2 dx}{\int_{S^n}f dx}
\\&\le&0
\end{eqnarray*}
The equality holds if and only if $fu^{-1}\sigma_n^{-1}=c$ where $c=\frac{\int_{S^n}fdx}{|S^n|}$ is a positive constant.
\end{proof}

\section{A priori estimates}
We firstly show the uniformly lower and upper bound of the solution to \eqref{NSF}.
\begin{lem}\label{C0estimate1}
Let $\mathcal{M}_t$, $t\in [0,T)$, be an origin-symmetric solution to  \eqref{NSF}. For the following cases: $1-n\beta-2\beta<\alpha<1+n\beta$, $\alpha\ne1-\beta$ and $\alpha=0$, $\beta=1$,
  there is a positive constant $C$ depending only on $\alpha$, $\beta$, $f$ and initial hypersurface, such that
 \[
 \frac{1}{C}\le u(\cdot, t)\le C.
 \]
\end{lem}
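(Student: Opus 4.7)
The plan is to split the statement into three subranges of $(\alpha,\beta)$ and exploit the monotonicity lemmas already established, together with the origin-symmetry of $\mathcal{M}_t$. The range $1-n\beta-2\beta < \alpha < 1-\beta$ is already settled inside Case (ii) of the proof of Lemma \ref{upboundeta}: a Blaschke--Santal\'o and John-ellipsoid argument applied to the integral bound \eqref{intbound} (whose exponent $q=(\alpha-1+\beta)/\beta$ is negative in this range) produces $u_{\max}(\cdot,t)\le C$, and the subsequent box argument using $\int_{S^n} u\sigma_n\,dx=|S^n|$ gives $u_{\min}(\cdot,t)\ge 1/C$, as the paragraph preceding Lemma \ref{boundeta} observes.

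For $1-\beta<\alpha<1+n\beta$ the strategy is the same but with the sign of $q$ reversed. Lemma \ref{monotone12} yields $\mathcal{Z}_{1/\beta}$ non-increasing, hence
\begin{equation*}
\int_{S^n} f^{1/\beta} u^{q}\,dx = \mathcal{Z}_{1/\beta}(u(\cdot,t)) \le \mathcal{Z}_{1/\beta}(u_0), \qquad q=\frac{\alpha-1+\beta}{\beta}>0.
\end{equation*}
Let $\xi_0(t)\in S^n$ be a direction at which $r(\cdot,t)$ attains $r_{\max}(t)=u_{\max}(t)$ (using \eqref{nablau}). Origin-symmetry forces $[-r_{\max}\xi_0,\,r_{\max}\xi_0]\subset\Omega_t$, so $u(y,t)\ge u_{\max}|\langle\xi_0,y\rangle|$ for every $y\in S^n$. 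Integrating,
\begin{equation*}
\bigl(\min_{S^n} f^{1/\beta}\bigr)\,u_{\max}^{q}\!\int_{S^n}|\langle\xi_0,y\rangle|^{q}dy \le \mathcal{Z}_{1/\beta}(u_0),
\end{equation*}
and since the spherical integral is a rotation-invariant positive constant depending only on $n,q$, this yields $u_{\max}(t)\le C$. For the lower bound, rotate coordinates so that $u_{\max}=u(e_1)$ and $u_{\min}=u(e_{n+1})$; origin-symmetry then puts $\Omega_t\subset\prod_{i=1}^{n+1}[-u(e_i),u(e_i)]$, and the normalised volume identity gives $|S^n|\le 2^{n+1}u_{\max}^n u_{\min}$, whence $u_{\min}(t)\ge 1/C$.

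The case $\alpha=0$, $\beta=1$ is entirely parallel, with Lemma \ref{monotone4} replacing Lemma \ref{monotone12}. Since $\eta\equiv\int_{S^n}f/|S^n|$ and $\int_{S^n}u\sigma_n\,dx=|S^n|$ by normalisation, monotonicity of $\mathcal{J}$ produces $\int_{S^n} f\log u\,dx\le C$. Inserting the origin-symmetric chord estimate $u(y)\ge u_{\max}|\langle\xi_0,y\rangle|$ and using that $\log|\langle\xi_0,\cdot\rangle|$ is integrable on $S^n$, together with $f\ge\min_{S^n} f>0$, one obtains $(\log u_{\max})\int_{S^n} f\,dx\le C$, hence $u_{\max}(t)\le C$; the box argument again provides $u_{\min}(t)\ge 1/C$.

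The critical step throughout is the upper bound on $u_{\max}$. The hypothesis $\alpha\ne 1-\beta$ is built in because at $\alpha=1-\beta$ the exponent $q$ vanishes and $\mathcal{Z}_{1/\beta}$ degenerates to $\int_{S^n} f^{1/\beta}\,dx$, carrying no information about $u$. Origin-symmetry is indispensable in passing from an integral moment of $u$ to a pointwise bound through the chord $[-r_{\max}\xi_0,\,r_{\max}\xi_0]\subset\Omega_t$; this is precisely why the non-symmetric regime $\alpha\ge 1+n\beta$ requires the separate analysis based on Lemma \ref{boundeta}, rather than fitting into the present lemma.
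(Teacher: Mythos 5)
Your proposal is correct and follows essentially the same route as the paper: the subrange $1-n\beta-2\beta<\alpha<1-\beta$ is delegated to Case~(ii) of Lemma~\ref{upboundeta}, the range $1-\beta<\alpha<1+n\beta$ uses the monotonicity of $\mathcal{Z}_{1/\beta}$ together with the origin-symmetric chord bound $u(y)\ge u_{\max}|\langle\xi_0,y\rangle|$ to control $u_{\max}$, and the case $\alpha=0,\beta=1$ is handled identically with the entropy functional from Lemma~\ref{monotone4}; the lower bound on $u$ comes from the same box argument built on $|S^n|=\mathrm{Vol}(\Omega_t)$. The only difference is cosmetic: you state the box argument uniformly for all three ranges, whereas the paper spells it out only in the third case (it applies verbatim to the others), so nothing of substance differs.
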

\begin{proof}
Let $r_{\min}(t)={\mathop{\min}_{S^n}}r(\cdot,t)$ and $r_{\max}(t)=\mathop{\max}_{S^n}r(\cdot,t)$. We may assume that $r_{\max}(t)=\mathop{\max}_{S^n}r(e_1,t)$ by rotating the coordinates. Since $\widetilde{\mathcal{M}}_t$ is origin-symmetric, the points $\pm r_{max}(t)e_1\in \widetilde{\mathcal{M}}_t$. Hence
\[u(x,t)=\sup\{p\cdot x: p\in \widetilde{\mathcal{M}}_t\}\ge r_{\max}|x\cdot e_1|, \forall{x}\in S^n.\]

For the case $1-\beta<\alpha<1+n\beta$,
we obtain
\begin{eqnarray*}
\mathcal{J}(u)=\int_{S^n}f^{\frac{1}{\beta}}u^{\frac{\alpha-1+\beta}{\beta}}dx
\ge r^{\frac{\alpha-1+\beta}{\beta}}_{max}(t) \int_{S^n}f^{\frac{1}{\beta}}|x\cdot e_1|^{\frac{\alpha-1+\beta}{\beta}}dx \ge C_0(\min_{S^n}f)^{\frac{1}{\beta}}r^{\frac{\alpha-1+\beta}{\beta}}_{max}(t),
\end{eqnarray*}
where $\alpha-1+\beta>0$.
By Lemma \ref{monotone12}, $\frac{d}{dt}\mathcal{J}(u)\le0$, we conclude
\[\mathcal{J}(u_0)\ge\mathcal{J}(u(t))\ge C_0(\min_{S^n}f)^{\frac{1}{\beta}}r^{\frac{\alpha-1+\beta}{\beta}}_{max}(t).\]
This implies $r_{max}\le C$ for some positive constant depending on $\alpha$, $\beta$, $f$ and initial hypersurface.

For the case $1-n\beta-2\beta<\alpha<1-\beta$,
 the uniform bounds of $u(\cdot,t)$ is obtained from the proof case (ii) in Lemma \ref{upboundeta}.

Now we consider the case $\alpha=0$, $\beta=1$. For $\mathcal{J}(u)=\frac{\int_{S^n}f \log u dx}{\int_{S^n}f dx}-\frac{1}{n+1}\log\int_{S^n}u\sigma_n dx$, we have proved $\frac{d}{dt}\mathcal{J}(u)\le0$. Since
\begin{eqnarray*}
\int_{S^n}f(x)\log u(x,t) dx&\ge&\big(\int_{S^n}f(x)dx\big)\log r_{\max}(t)+\int_{S^n}f(x)\log |x\cdot e_1| dx
\\&\ge&\big(\int_{S^n}f(x)dx\big)\log r_{\max}(t)-C\mathop{\max}_{S^n}f,
\end{eqnarray*}
we have $\mathcal{J}(u_0)\ge\mathcal{J}(u(t))\ge\log r_{\max}(t)-C\frac{\mathop{\max}_{S^n}f}{|S^n|\mathop{\min}_{S^n}f}-\frac{1}{n+1}\log |S^n|$, which implies $r_{\max}\le C$.
Since $u_{\max}(t)=r_{\max}(t)$, we therefore get the uniformly upper bound of $u(\cdot,t)$.
For origin-symmetric convex body $\Omega_t$, by rotating the coordinates and constructing the cube $Q_t$ just as the same way of Case (ii) in the proof of Lemma \ref{upboundeta}, we have
 \[
 |S^n|=\mathrm{Vol}(\Omega_t)\le 2^{n+1}r^{n}_{\max}(t)r_{\min}(t).
 \]
 Therefore we get the uniform lower bound of $u(\cdot,t)$ since $u_{\min}(t)=r_{\min}(t)$. Hence we complete the proof.
\end{proof}

\begin{lem}\label{C0estimate2}
Let $\mathcal{M}_t$, $t\in [0,T)$, be a solution to  \eqref{NSF}. If $\alpha\ge n\beta+1$ and $\beta>0$, there is a positive constant $C$ depending only on $\alpha$, $\beta$ and the initial hypersurface such that
                          \[\frac{1}{C}\le u(., t)\le C.\]
\end{lem}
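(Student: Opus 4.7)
The plan is to apply the parabolic maximum principle to the scalar evolution $\partial_t u = -f u^\alpha \sigma_n^{-\beta} + \eta(t) u$, exploiting the two-sided bound on $\eta(t)$ supplied by Lemma \ref{boundeta}. The key geometric input is the elementary support--curvature comparison: at a point $x_0 \in S^n$ where $u(\cdot, t)$ attains its spherical maximum, the inequality $\nabla^2 u \le 0$ forces the radii matrix $W_u = \nabla^2 u + uI \le u_{\max} I$, hence $\sigma_n(W_u) \le u_{\max}^n$; at a minimum point the reverse inequality yields $\sigma_n \ge u_{\min}^n$. No origin-symmetry is required, so the argument applies in full generality.

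\textbf{Upper bound.} Inserting $\sigma_n \le u_{\max}^n$ into the flow and applying Hamilton's trick gives the pointwise ODI
\[
\frac{d}{dt} u_{\max}(t) \le u_{\max}(t)\bigl(\eta(t) - f_{\min}\, u_{\max}(t)^{\alpha - n\beta - 1}\bigr),
\]
where $f_{\min} = \min_{S^n} f > 0$. When $\alpha > 1 + n\beta$ the exponent $\alpha - n\beta - 1 > 0$, so once $u_{\max}$ crosses the threshold $(2\eta_{\max}/f_{\min})^{1/(\alpha - n\beta - 1)}$ the right-hand side becomes negative; consequently $u_{\max}(t)$ is bounded by a constant depending only on $\alpha$, $\beta$, $f$, and $u(\cdot,0)$.

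\textbf{Lower bound.} The symmetric argument at a minimum point produces
\[
\frac{d}{dt} u_{\min}(t) \ge u_{\min}(t)\bigl(\eta(t) - f_{\max}\, u_{\min}(t)^{\alpha - n\beta - 1}\bigr).
\]
For $\alpha > 1 + n\beta$, any $u_{\min}$ below $(\eta_{\min}/(2 f_{\max}))^{1/(\alpha - n\beta - 1)}$ makes the right-hand side strictly positive, so $u_{\min}(t)$ cannot fall below a positive constant.

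\textbf{Main obstacle.} The principal difficulty is the borderline exponent $\alpha = 1 + n\beta$, where the two comparison inequalities collapse into the linear ODIs $\dot y \le (\eta_{\max} - f_{\min})y$ and $\dot y \ge (\eta_{\min} - f_{\max})y$, which do not by themselves control the scale of $u$. This reflects the scale-invariance of the underlying elliptic equation $fu^{\alpha-1}\sigma_n^{-\beta} = c$ at this threshold --- its normalisation is pinned solely by $\int u\sigma_n\,dx = |S^n|$. To close this sub-case I would augment the pointwise maximum-principle route with the bound $\mathcal{Z}_\theta(u) \le \mathcal{Z}_\theta(u_0)$ for sufficiently negative $\theta$ (established in the proof of Lemma \ref{boundeta}) together with the conserved volume identity, extracting $L^p$-bounds of $u$ and of $1/u$, and then promoting them to pointwise bounds via the Alexandrov--Fenchel inequality of Lemma \ref{AFI} and the Blaschke--Santal\'o inequality applied to the polar body $\Omega^*$.
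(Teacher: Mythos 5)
Your maximum-principle argument for the strict case $\alpha > n\beta + 1$ is precisely the paper's: at a spatial minimum (resp.\ maximum) of $u$ the pointwise inequality $\sigma_n \ge u_{\min}^n$ (resp.\ $\sigma_n \le u_{\max}^n$) converts the flow into a scalar differential inequality whose right-hand side changes sign at a definite threshold once $\alpha - n\beta - 1 > 0$, and the two-sided bound on $\eta(t)$ from Lemma~\ref{boundeta} closes the loop. You also correctly flag $\alpha = n\beta + 1$ as the genuine obstruction.

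The repair you sketch for the borderline case, however, does not go through, for two independent reasons. First, the monotonicity $\mathcal{Z}_\theta(u)\le\mathcal{Z}_\theta(u_0)$ established in the proof of Lemma~\ref{boundeta} relies on choosing $\theta \le \frac{1+n}{1+n\beta-\alpha}$, which is only a finite negative number when $\alpha > 1 + n\beta$. At $\alpha = 1 + n\beta$ the coefficient $1 + (\alpha-1)\theta + n(1-\theta\beta)$ in the expression for $\frac{d}{dt}\mathcal{Z}_\theta$ collapses to $1+n > 0$ for \emph{every} $\theta$, so the first term has the wrong sign and no choice of $\theta$ makes $\mathcal{Z}_\theta$ monotone; Lemma~\ref{boundeta} itself is stated only for $\alpha > 1+n\beta$ for exactly this reason. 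Second, the Blaschke--Santal\'o argument of case (ii) of Lemma~\ref{upboundeta} is run for origin-symmetric $\Omega_t$ (it depends on $u(y)\ge \tfrac12 d\,|x_0\cdot y|$ and on the symmetric John ellipsoid), whereas Lemma~\ref{C0estimate2} makes no symmetry hypothesis --- indeed Theorem~\ref{main3} is precisely the case where that hypothesis is dropped. The paper instead handles $\alpha = n\beta + 1$ by a three-step argument exploiting the criticality of the exponent: (1) the quantity $Q = f u^{\alpha-1}\sigma_n^{-\beta}$ satisfies a linear parabolic equation with \emph{no} zeroth-order term precisely because $\alpha - 1 - n\beta = 0$, so the maximum principle confines $Q$ to its initial range; (2) an auxiliary-function argument yields the gradient bound $|\nabla\log u|\le C$; (3) the $Q$-bound (equivalently, $\sigma_n \sim u^n$), the conserved normalization $\int_{S^n} u\sigma_n\,dx = |S^n|$, and the gradient bound together fix the scale of $u$. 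This mechanism has no analogue in your proposal and cannot be replaced by the $\mathcal{Z}_\theta$/Blaschke--Santal\'o route you suggest.
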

\begin{proof}
For the case $\alpha>n\beta+1$,  let $u_{\min}(t)=\min_{x\in S^n}u(.,t)$, we have
\[\frac{du_{\min}}{dt}\ge- u_{\min}(fu_{\min}^{\alpha-n\beta-1}-\eta).\]
Hence,
$u_{\min}\ge \min\{(\frac{\min\eta}{\mathop{\max}_{S^n}f})^\frac{1}{\alpha-n\beta-1}, {u_{\min}(0)}\}.$

Similarly, we have $u_{\max}\le \max \{(\frac{\max\eta}{\mathop{\min}_{S^n}f})^\frac{1}{\alpha-n\beta-1}, {u_{\max}(0)}\}$, where we have used the uniform upper and lower bounds of $\eta(t)$ for $\alpha>n\beta+1$, $\beta\ge1$ in Lemma \ref{boundeta}.

Next we study the case $\alpha=n\beta+1$ by the following three steps.

Step 1: Consider the function
\[Q=fu^{\alpha-1}\sigma_n^{-\beta}.
\]
Since \[(fu^\alpha\sigma_n^{-\beta})_{ij}=Q_{ij}u+Q_iu_j+Q_ju_i+Qu_{ij},
\]
we get
\begin{eqnarray}\label{estimate2}
\nonumber\partial_tQ&=&-(\alpha-1)f^2u^{2\alpha-2}\sigma_n^{-2\beta}+\eta(\alpha-1-n\beta)fu^{\alpha-1}\sigma_n^{-\beta}
\\
\nonumber&{}&+\beta f^2u^{2\alpha-1}\sigma_n^{-2\beta-1}\sum_i^n\sigma_n^{ii}+\beta fu^{\alpha-1}\sigma_n^{-\beta-1}\sigma_n^{ij}(fu^{\alpha}\sigma_n^{-\beta})_{ij}
\\
\nonumber&=&-(\alpha-1-n\beta)Q^2+\eta(\alpha-1-n\beta)Q+\beta fu^{\alpha}\sigma_n^{-\beta-1}\sigma_n^{ij}Q_{ij}\\
\nonumber&{}&+2\beta fu^{\alpha-1}\sigma_n^{-\beta-1}\sigma_n^{ij}Q_iu_j
\\
\nonumber&=&\beta fu^{\alpha}\sigma_n^{-\beta-1}\sigma_n^{ij}Q_{ij}+2\beta fu^{\alpha-1}\sigma_n^{-\beta-1}\sigma_n^{ij}Q_iu_j
\end{eqnarray}
It is easy to see
\begin{equation}\label{usigma}
C^{-1}\le Q \le C,
\end{equation} where $C$ depends only on the initial hypersurface.

Step 2:
Let $w=\log u$. Then
\[
h_{ij}=u_{ij}+u\delta_{ij}=u(w_{ij}+w_iw_j+\delta_{ij})
\]
We may prove $|\nabla w|< A$, for some positive constant $A>0$ along the flow. Otherwise there is a point $(x_{t_0}, t_0)$ where $t_0$ is the first time, such that $|\nabla u|^2-Au^2=0$, $A>0$ is a constant to be determined later. Hence at the point $(x_{t_0},t_0)$, $\nabla_i|\nabla w|^2=0$ and $\partial_t |\nabla w|^2\ge 0$.
Choosing an orthonormal frame and rotating the the coordinates, such that $w_1=|\nabla w|$, $w_i=0$ for $i=2,\cdots,n$, and $(w_{ij})$ is diagonal at $(x_{t_0}, t_0)$. We then get
\[(a_{ij}):=(w_{ij}+w_iw_j+\delta_{ij})= {\rm{diag}} (1+w_1^2,1+w_{22},\cdots,1+w_{nn}),
\]
and
\begin{eqnarray*}
 0&\le&\partial_t(|\nabla u|^2-Au^2)=-2u_i(fu^\alpha)_i\sigma_n^{-\beta}+2\beta fu^\alpha\sigma_n^{-\beta-1}\sigma_n^{kl}\nabla_ih_{kl}u_i+2Afu^{\alpha+1}\sigma_n^{-\beta}
 \\&\le&-2u_i(fu^\alpha)_i\sigma_n^{-\beta}+4n\beta fu^{\alpha+1}\sigma_n^{-\beta}-2\beta fu^{\alpha+2}\sigma_n^{-\beta-1}\sum_i^n\sigma_n^{ii}-2\beta  fu^\alpha\sigma_n^{-\beta-1}\sigma_n^{ij}h_{li}h_{lj}
 \\&{}&+2A(n\beta+1)fu^{\alpha+1}\sigma_n^{-\beta}-2A\beta fu^{\alpha+2}\sigma_n^{-\beta-1}\sum_i^n\sigma_n^{ii}
 \\&{}&+2A\beta fu^{\alpha}\sigma_n^{-\beta-1}\sigma_n^{ij}u_iu_j+2\beta fu^\alpha\sigma_n^{-\beta-1}\sigma_n^{ij}u_iu_j.
\end{eqnarray*}
Substituting $u_i=uw_i$ and  $w_1^2=A$ into the above inequality, denoting that  $\sigma_n=\sigma_n(a_{ij})$, we have
\begin{eqnarray*}
 0&\le& A(1+n\beta-\alpha)+\sqrt{A}\frac{|\nabla f|}{f}+2n\beta-(A+1)^2\beta\frac{\sigma_n^{11}}{\sigma_n}\\&{}&-(A+1)\beta\frac{\sum_i^n\sigma_n^{ii}}{\sigma_n}+ A(A+1)\beta\frac{\sigma_n^{11}}{\sigma_n}
 \\&\le&\sqrt{A}\frac{|\nabla f|}{f}+2n\beta-(A+1)\beta\frac{\sum_i^n\sigma_n^{ii}}{\sigma_n}.
\end{eqnarray*}
Then $(A+1)C_0 \sigma_n^{-\frac{1}{n}}\le (A+1)\beta\frac{\sum_i^n\sigma_n^{ii}}{\sigma_n}\le\sqrt{A}\frac{|\nabla f|}{f}+2n\beta$, since $\frac{\sum_i^n\sigma_n^{ii}}{\sigma_n}\ge C(n)\sigma_n^{-\frac{1}{n}}$ by the classic Newton-MacLaurin inequality, and  $\sigma_n^{-\frac{1}{n}}(a_{ij})=u\sigma_n^{-\frac{1}{n}}[W_u]$ is bounded by \eqref{usigma}.
Let $A$ be large enough, we then get a contradiction. Hence we obtain
\begin{equation}\label{nablaubound}
|\nabla\log u|\le C.
\end{equation}

Step 3: For the normalised flow \eqref{NSF}, $\int_{S^n}u\sigma_n dx=|S^n|$ is constant. By Step 1, there is a positive constant $C$, such that $C^{-1}\le u^{n\beta}\sigma_n^{-\beta}\le C$. Hence we have
\[C^{-\frac{1}{\beta}}u_{\min}^{n+1}(t)\le\frac{\int_{S^n}u\sigma_ndx}{|S^n|}=1\le C^{\frac{1}{\beta}} u_{\max}^{n+1}(t)\]
We therefore obtain the uniform upper and lower bounds on $u$ from \eqref{nablaubound}.
\end{proof}
Since $\frac{1}{C}\le u\le C$, for some positive constant $C$, by the convexity of the hypersurface \eqref{nablau}, it is easy to get the following gradient estimate.
\begin{cor}\label{C1estimate}
Let $u(\cdot,t)$ be a solution to the flow \eqref{NSF}. Then we have the gradient estimate
\[|\nabla u(\cdot,t)|\le C,\]
where the positive constant $C$ depends only on $\alpha$, $\beta$, $f$ and the initial hypersurface.
\end{cor}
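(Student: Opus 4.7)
The plan is to deduce the gradient estimate directly from the $C^0$ bounds established in Lemma \ref{C0estimate1} and Lemma \ref{C0estimate2}, together with the geometric identity \eqref{nablau} that relates the support function, the radial function, and $|\nabla u|$ for a convex hypersurface.

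Specifically, equation \eqref{nablau} gives the pointwise identity
\[
r^2 = u^2 + |\nabla u|^2,
\]
so trivially
\[
|\nabla u|^2 = r^2 - u^2 \le r^2 \le r_{\max}^2.
\]
Thus it suffices to produce a uniform upper bound on the radial function $r(\cdot,t)$ of the evolving hypersurface. For this I would invoke the standard convexity fact (already used in the proofs of the $C^0$ lemmas) that for a smooth, closed, uniformly convex hypersurface enclosing the origin, $r_{\max}(t) = u_{\max}(t)$; indeed, the point where $r$ attains its maximum is also a point where $\nabla u = 0$ in that direction, so \eqref{nablau} forces $r = u$ there. Combining this with the uniform upper bound $u(\cdot,t) \le C$ from Lemma \ref{C0estimate1} (in the cases $1 - n\beta - 2\beta < \alpha < 1 + n\beta$, $\alpha \ne 1-\beta$, and $\alpha = 0$, $\beta = 1$) or Lemma \ref{C0estimate2} (in the case $\alpha \ge n\beta + 1$, $\beta > 0$) yields $r_{\max}(t) \le C$, and hence $|\nabla u(\cdot,t)| \le C$.

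There is no substantive obstacle here: the corollary is purely a pointwise consequence of the geometric identity and the already-proved $C^0$ estimate. The only care needed is to confirm that the argument $r_{\max} = u_{\max}$ applies uniformly in $t$, which is immediate from the uniform convexity of $\mathcal{M}_t$ maintained along the flow. The constant $C$ in the final estimate inherits its dependence on $\alpha$, $\beta$, $f$, and $\mathcal{M}_0$ exactly through the constant coming from the $C^0$ bound.
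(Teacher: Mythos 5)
Your proof is correct and matches the paper's (implicit) argument exactly: the paper simply notes that the gradient bound follows from the $C^0$ estimates and the identity \eqref{nablau}, and your write-up fills in precisely those steps, namely $|\nabla u|^2 = r^2 - u^2 \le r_{\max}^2 = u_{\max}^2 \le C^2$.
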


Similarly we have the estimates for the radial function $r$.
\begin{lem}\label{rC0estimate,rC1estimate}
Let $\mathcal{M}_t$ be the solution to the flow \eqref{NSF}. Then we have the estimate
\[\mathop{\min}_{S^n\times(0,T]}u\le r(\cdot,t)\le \mathop{\max}_{S^n\times(0,T]}u,\]
and
\[|\nabla r(\cdot,t)|\le C,\]
where  $C>0$ depends only on $\alpha$, $\beta$, $f$ and the initial hypersurface.
\end{lem}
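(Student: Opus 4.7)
The plan is to deduce both estimates for $r$ directly from the corresponding bounds on the support function $u$ (Lemma \ref{C0estimate1}, Lemma \ref{C0estimate2}, and Corollary \ref{C1estimate}) via the pointwise algebraic identities \eqref{nablau} and \eqref{nablar}, which relate $r,\nabla r$ to $u,\nabla u$. There is essentially no new PDE analysis to do here; the content is purely pointwise geometry, so I expect no serious obstacle.

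For the $C^0$ bound on $r$, the lower bound is immediate from \eqref{nablau}: at every point
\[
r=\sqrt{u^2+|\nabla u|^2}\ \ge\ u\ \ge\ \min_{S^n\times(0,T]}u.
\]
For the upper bound, let $x_0\in S^n$ be a point where $r(\cdot,t)$ attains its maximum. Then $\nabla r(x_0,t)=0$, so identity \eqref{nablar} collapses to $u(x_0,t)=r(x_0,t)$. Therefore
\[
\max_{S^n}r(\cdot,t)=r(x_0,t)=u(x_0,t)\ \le\ \max_{S^n\times(0,T]}u.
\]

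For the gradient estimate, I would invert \eqref{nablar} algebraically. Squaring $u=r^2/\sqrt{r^2+|\nabla r|^2}$ yields $u^2(r^2+|\nabla r|^2)=r^4$, hence
\[
|\nabla r|^2=\frac{r^2(r^2-u^2)}{u^2}=\frac{r^2|\nabla u|^2}{u^2},
\]
where the last equality uses $r^2-u^2=|\nabla u|^2$ from \eqref{nablau}. Consequently $|\nabla r|=(r/u)|\nabla u|$. By the $C^0$ bounds on $u$ (Lemmas \ref{C0estimate1} and \ref{C0estimate2}) together with the bound on $r$ just established and the gradient bound $|\nabla u|\le C$ from Corollary \ref{C1estimate}, the right-hand side is uniformly bounded by a constant $C$ depending only on $\alpha$, $\beta$, $f$ and $\mathcal{M}_0$, proving $|\nabla r(\cdot,t)|\le C$.
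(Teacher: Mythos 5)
Your proof is correct and follows essentially the same approach as the paper: both deduce the bounds on $r$ and $\nabla r$ purely from the pointwise algebraic identities \eqref{nablau} and \eqref{nablar} together with the already-established $C^0$ (and for you, $C^1$) estimates on $u$. The paper invokes the slightly cruder bound $|\nabla r|\le r^2/u$ where you derive the exact identity $|\nabla r|=(r/u)|\nabla u|$, but the substance is the same.
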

\begin{proof}
By \eqref{nablau} and \eqref{nablar}, we infer that
\[\mathop{\min}_{S^n}u(\cdot,t)=\mathop{\min}_{S^n}r(\cdot,t),\quad \mathop{\max}_{S^n}u(\cdot,t)=\mathop{\max}_{S^n}r(\cdot,t) \quad {\rm{and}} \quad|\nabla r|\le\frac{r^2}{u}.\]
Therefore, the two estimates follow from  Lemmata \ref{C0estimate1}-\ref{C0estimate2} directly.
\end{proof}

\begin{lem}\label{lowerboundsigma_n}
Let $X(\cdot, t)$ be a uniformly convex solution to the normalised flow \eqref{NSF} which encloses the origin for $t\in [0,T)$.  Then there is a positive constant $C$ depending only on $f$, $\alpha,\beta$ and the initial hypersurface, such that
\[\sigma_n([w_u])\ge C.\]
\end{lem}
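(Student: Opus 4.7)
The plan is to prove a uniform upper bound on $Q := fu^{\alpha-1}\sigma_n^{-\beta}$; combined with the positivity of $f$ and the $C^0$ estimates in Lemmas \ref{C0estimate1}--\ref{C0estimate2}, this gives the desired lower bound $\sigma_n\ge C$. I would use the evolution equation for $Q$ already obtained in Step~1 of the proof of Lemma \ref{C0estimate2}:
\[
\partial_t Q \;=\; \beta fu^{\alpha}\sigma_n^{-\beta-1}\sigma_n^{ij}Q_{ij}+2\beta fu^{\alpha-1}\sigma_n^{-\beta-1}\sigma_n^{ij}Q_i u_j+(\alpha-1-n\beta)(\eta Q-Q^2),
\]
which is valid across all admissible parameter ranges.

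For $\alpha\ge 1+n\beta$ the conclusion follows by a direct maximum principle applied to $Q$ itself: by Lemmas \ref{upboundeta} and \ref{boundeta} the quantity $\eta(t)$ is uniformly bounded, so $(\alpha-1-n\beta)(\eta Q-Q^2)\le 0$ whenever $Q\ge \sup_t\eta$; at a spatial maximum of $Q$ the second-order term satisfies $\sigma_n^{ij}Q_{ij}\le 0$ and the gradient term vanishes, forcing $Q_{\max}(t)\le \max\{Q_{\max}(0),\,\sup_t\eta\}$.

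For $1-n\beta-2\beta<\alpha<1+n\beta$ with $\alpha\ne 1-\beta$, and for $\alpha=0$, $\beta=1$, the reaction term carries the wrong sign and I would use a Tso-type auxiliary function
\[
W \;:=\; \frac{Q}{u-a},\qquad 0<a<\tfrac{1}{2}\min_{S^n\times[0,T)} u,
\]
which is admissible by Lemma \ref{C0estimate1}. At a spatial maximum of $W$ the critical-point conditions yield $\nabla Q/Q=\nabla u/(u-a)$ and, after contraction against the positive matrix $\sigma_n^{ij}$,
\[
\frac{\sigma_n^{ij}Q_{ij}}{Q}\;\le\;\frac{\sigma_n^{ij}u_{ij}}{u-a}\;=\;\frac{n\sigma_n-u\sum_i\sigma_n^{ii}}{u-a}.
\]
Substituting these identities into $\partial_t Q$ and using $\partial_t W\ge 0$ at the max produces, when $W$ is large, an inequality dominated by the Newton--MacLaurin term $\sum_i\sigma_n^{ii}\ge n\sigma_n^{(n-1)/n}$. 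Since $\sigma_n^{-1/n}\sim Q^{1/(n\beta)}$, this gives a negative feedback of order $-cW^{2+1/(n\beta)}$ against polynomial positive terms of order at most $W^2$, whence $W$ (and therefore $Q$) stays bounded by an ODE comparison argument.

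The principal technical obstacle is absorbing the positive gradient contribution $2\beta Q^2\sigma_n^{ij}u_iu_j/[\sigma_n(u-a)]$ that appears after the substitution: it is of the same order in $Q$ as the favourable concave term $-\beta u^2 Q^2\sum_i\sigma_n^{ii}/[\sigma_n(u-a)]$. One absorbs it via the pointwise bound $\sigma_n^{ij}u_iu_j\le |\nabla u|^2\sum_i\sigma_n^{ii}$ (valid in any principal frame of $h_{ij}$) together with the $C^1$ estimate $|\nabla u|\le C$ from Corollary \ref{C1estimate}; if the resulting coefficient of $\sum_i\sigma_n^{ii}$ fails to be strictly negative, one refines to $W=Q/(u-a)^m$ with $m>1$ so that the concave $-u^2$-term in the coefficient is enlarged relative to the positive gradient piece. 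With this absorption completed, the Newton--MacLaurin feedback dominates uniformly across the admissible ranges of $(\alpha,\beta)$, and the desired bound $\sigma_n\ge C$ follows.
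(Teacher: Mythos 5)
Your plan --- a Tso-type maximum-principle estimate to bound $Q=fu^{\alpha-1}\sigma_n^{-\beta}$ from above --- is the same strategy as the paper's, but the specific auxiliary function $W=Q/(u-a)$ does not produce the cancellation the argument needs, and your proposed remedy pushes in the wrong direction. Carry out the substitutions at a spatial maximum of $W$: the first-order condition gives $Q_i=Qu_i/(u-a)$, the second-order condition gives $\sigma_n^{ij}Q_{ij}\le\frac{Q}{u-a}\sigma_n^{ij}u_{ij}=\frac{Q}{u-a}\bigl(n\sigma_n-u\sum_i\sigma_n^{ii}\bigr)$, and (writing $fu^\alpha\sigma_n^{-\beta-1}=uQ/\sigma_n$, $fu^{\alpha-1}\sigma_n^{-\beta-1}=Q/\sigma_n$) the coefficient of $\sum_i\sigma_n^{ii}$ in the resulting upper bound on $\partial_t W$ is a positive multiple of $-u^2+2\,\sigma_n^{ij}u_iu_j/\sum_i\sigma_n^{ii}$. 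Your bound $\sigma_n^{ij}u_iu_j\le|\nabla u|^2\sum_i\sigma_n^{ii}$ is sharp (it saturates when $\nabla u$ points in the direction of the smallest principal radius), so negativity of the coefficient requires $u^2>2|\nabla u|^2$ pointwise. That is equivalent, via $r^2=u^2+|\nabla u|^2$, to $3u^2>2r^2$, and the $C^0$/$C^1$ estimates only give $r/u$ bounded, not bounded by $\sqrt{3/2}$. The refinement $W=Q/(u-a)^m$ with $m>1$ makes matters strictly worse: the second-order condition becomes $\sigma_n^{ij}Q_{ij}\le\frac{mQ}{u-a}\sigma_n^{ij}u_{ij}+\frac{m(m-1)Q}{(u-a)^2}\sigma_n^{ij}u_iu_j$ and the gradient term becomes $\sigma_n^{ij}Q_iu_j=\frac{mQ}{u-a}\sigma_n^{ij}u_iu_j$, so the favourable $-u^2\sum_i\sigma_n^{ii}$ piece scales linearly in $m$ while one acquires an \emph{additional} positive Hessian term proportional to $m(m-1)\sigma_n^{ij}u_iu_j$; the absorption condition turns into $u^2>\bigl[\frac{(m-1)u}{u-a}+2\bigr]|\nabla u|^2$, which is harder for larger $m$, not easier.

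The paper avoids the issue by a different (and essential) choice of numerator. Its auxiliary function is $G=fu^\alpha\sigma_n^{-\beta}/(u-\epsilon)=(uQ)/(u-\epsilon)$, i.e.\ the flow speed $-u_t+\eta u$ divided by $u-\epsilon$. The evolution of $\bar Q:=uQ=fu^\alpha\sigma_n^{-\beta}$ carries no explicit gradient term; instead it carries a reaction term $+\beta\frac{\bar Q^2}{\sigma_n}\sum_i\sigma_n^{ii}$, and once one substitutes the Tso bound $\sigma_n^{ij}\bar Q_{ij}\le\frac{\bar Q}{u-\epsilon}\sigma_n^{ij}u_{ij}$, the $-\frac{u}{u-\epsilon}\cdot\frac{\beta\bar Q^2}{\sigma_n}\sum_i\sigma_n^{ii}$ piece from the Laplacian combines with the reaction term to leave exactly $-\frac{\epsilon\beta\bar Q^2}{\sigma_n(u-\epsilon)}\sum_i\sigma_n^{ii}$, a coefficient that is negative unconditionally, with no constraint relating $u$ and $|\nabla u|$. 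That strictly negative coefficient is what feeds the Newton--MacLaurin inequality $\sum_i\sigma_n^{ii}/\sigma_n\ge C\sigma_n^{-1/n}\sim G^{1/(n\beta)}$ and closes the maximum principle. So the factor of $u$ in the numerator is not cosmetic; it is precisely what the cancellation needs, and your $W$ lacks it. (Your handling of the range $\alpha\ge 1+n\beta$ by a direct maximum principle on $Q$ is correct and matches Step 1 in the proof of Lemma \ref{C0estimate2}.)
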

\begin{proof}
Consider the following auxiliary function
\[G=\frac{-u_t+\eta u}{u-\epsilon}={\frac{ fu^\alpha{\sigma}_{n}^{-\beta}}{u-\epsilon}},\]
where
 ${\epsilon}=\frac{1}{2}\min_{{S^n\times{[0,T)}}}u$.
Suppose that $G_{\max}(t)=\max_{x\in S^n}G(x,t)=G(x_t,t)$, at $x_t$, we then have
\begin{eqnarray}\label{q_1}
0=G_i=\frac{-u_{ti}+\eta u_i}{u-\epsilon}-\frac{(-u_t+ \eta u){u_i}}{(u-\epsilon)^2},
\end{eqnarray}
\begin{eqnarray}\label{q_2}
0{\ge}{G_{ij}}=\frac{-u_{tij}+ \eta u_{ij}}{u-\epsilon}-\frac{(-u_t+ \eta u){u_{ij}}}{(u-\epsilon)^2},
\end{eqnarray}
and
\begin{eqnarray*}
{\partial}_{t}{G}&=&\frac{-u_{tt}+\eta_tu+\eta u_{t}}{u-\epsilon}-\frac{(-u_t+\eta u){u_t}}{({u-\epsilon})^2}
\\&=&\frac{\alpha fu^{\alpha-1}u_t \sigma_n^{-\beta}-\beta fu^{\alpha}\sigma_n^{-\beta-1} \sigma_n^{ij}(u_{tij}+u_t\delta_{ij})}{u-\epsilon} -G\frac{u_t}{u-\epsilon}
\\&\le&\alpha Gu_tu^{-1}+\frac{\beta fu^\alpha\sigma_n^{-\beta-1}  \sigma_{n}^{ij}(Gu_{ij}-\eta u_{ij}-u_{t}\delta_{ij})}{u-\epsilon}-G\frac{u_t}{u-\epsilon}
\\&\le&\alpha G\big(\eta-G\frac{u-\epsilon}{u}\big)+(G-\eta)\frac{\beta fu^\alpha\sigma_n^{-\beta-1} \sigma_n^{ij}(h_{ij}-u\delta_{ij})}{u-\epsilon}\nonumber\\&{}&+\beta fu^\alpha\sigma_n^{-\beta-1}\sum_i\sigma_n^{ii}(G-\frac{ \eta u}{u-\epsilon})+G(G-\frac{\eta u}{u-\epsilon})
\\&=& (n\beta+1-\alpha+\frac{\epsilon\alpha}{u})G^2+\eta(\alpha-n\beta-\frac{u}{u-\epsilon})G-\epsilon\beta G^2\frac{\sum_i\sigma_n^{ii}}{\sigma_n}.
\end{eqnarray*}
Without loss of generality, we assume $G\gg1$.

For the case $\alpha\ge0$, $\beta>1$; $\alpha>1-n\beta-2\beta$, $\alpha\ne1-\beta$, $0<\beta\le1$ and $\alpha=0$, $\beta=1$, we have $\eta(t)\le C$ for some positive constant $C$ by Lemma \ref{upboundeta}.
Applying $G<G^2$ and the inequality $\frac{\sum_i\sigma_n^{ii}}{\sigma_n}\ge C\sigma_n^{-\frac{1}{n}}$, we get
\begin{eqnarray*}
{\partial}_{t}{G}&\le& \left | \left(n\beta+1-\alpha+\frac{\epsilon\alpha}{u}\right)\right|G^2+\eta\left |\left(\alpha-n\beta-\frac{u}{u-\epsilon}\right)\right |G^2-\epsilon\beta G^2\frac{\sum_i\sigma_n^{ii}}{\sigma_n}
\\&\le& C_1G^2-C_2G^2G^\frac{1}{\beta n}.
\end{eqnarray*}
For the case $\alpha<0$, $\beta>1$, we obtain $\eta(t)=\frac{\int_{S^n}fu^\alpha\sigma_n^{1-\beta}dx}{|S^n|}\le C_0G(x_t,t)^\frac{\beta-1}{\beta}$ since $G=\frac{fu^\alpha\sigma_n^{-\beta}}{u-\epsilon}$ and $u$ is uniformly bounded. Applying $G^\frac{\beta-1}{\beta}<G$ at $x_t$ and the inequality $\frac{\sum_i\sigma_n^{ii}}{\sigma_n}\ge C\sigma_n^{-\frac{1}{n}}$, we get
\begin{eqnarray*}
{\partial}_{t}{G}&\le& \left | \left(n\beta+1-\alpha+\frac{\epsilon\alpha}{u}\right)\right|G^2+\left |\left(\alpha-n\beta-\frac{u}{u-\epsilon}\right)\right |G^2-\epsilon\beta G^2\frac{\sum_i\sigma_n^{ii}}{\sigma_n}
\\&\le& C_1G^2-C_2G^2G^\frac{1}{\beta n}.
\end{eqnarray*}
It is easy to see that there exists a positive constant $C_3$ , s.t. $G\le C_3$, where $C_3$ is a constant depending only on $f$, $\alpha$, $\beta$ and the initial hypersurface.
Hence we obtain $\sigma_n([w_u])\ge C$, where $C$ is a constant depending only on $f$, $\alpha$, $\beta$ and the initial hypersurface.
\end{proof}
Hence we get $\eta(t)\le C$ for the case $\alpha>1-n\beta-2\beta$, $\alpha\ne1-\beta$, $\beta>0$ and $\alpha=0$, $\beta=1$ by Lemma \ref{upboundeta} and Lemma \ref{lowerboundsigma_n}.
Next we prove the principal curvature radii of $\mathcal{M}_t$ is bounded. We study an expanding flow of Gauss curvature for the dual hypersurface of $\mathcal{M}_t$. The method is inspired by \cite{LSW16}. Similar idea was previously used by Ivaki in \cite{Iva14}.

Under the evolution equation \eqref{NSF},  the radial function of the hypersurface $\mathcal{M}$ evolves as
\begin{equation}\label{NSFr}
\left\{\begin{array}{ll}
\partial_t{r}(\xi,t)&=-f{\frac{\sqrt{r^2+|{\nabla}r|^2}}{r}}u^\alpha K^\beta+\eta r\\[0.2cm]
r(.,0)&=r_0
\end{array}\right.
\end{equation}
where $K$ is the Gauss-Kronecker curvature of $\mathcal{M}$.

Let $\Omega$ be a convex body enclosing the origin, $\partial\Omega=\mathcal{M}$. The dual body of $\Omega$ with respect to the origin, denoted by $\Omega^*$.
Its support function $u^*(\xi,t)=\frac{1}{r(\xi,t)}$, hence $\hbar^*_{ij}=\nabla_{ij}\frac{1}{r}+\frac{1}{r}\delta_{ij}=\frac{-rr_{ij}+2r_ir_j+r^2\delta_{ij}}{r^3}$ and\begin{equation}\label{relationeqs}
K=\frac{\det {\hbar}_{ij}}{\det g_{ij}},\qquad \frac{1}{\sigma_n^*}=\frac{\det e_{ij}}{\det {\hbar}^*_{ij}},\qquad\frac{\det e_{ij}}{\det g_{ij}}=\frac{1}{r^{2n-2}(r^2+|{\nabla}r|^2)}.
\end{equation}
Hence by \eqref{nablar} and \eqref{relationeqs}, we obtain the following equality
\begin{eqnarray}\label{polarrelation}
\frac{{u(x,t)}^{n+2}{u^*(\xi,t)}^{n+2}}{K(p)K^*(p^*)}=1
\end{eqnarray}
where $p\in\mathcal{M}_t$, $p^*$ satisfies the polar relation $p\cdot p^*=1$ and $p^*\in{\mathcal{M}^*_t}$, $K^*$ is the Gauss curvature at $p^*$. $x,\, \xi$ are the unit outer normals of $\mathcal{M}_t$ and ${\mathcal{M}^*_t}$ respectively.
Therefore, by the normalised flow \eqref{NSFr} and the relation \eqref{polarrelation},  we obtain the flow for the support function $u^*$
\begin{eqnarray*}
{\partial_t{u^*(\xi,t)}}&=&\partial_t{\frac{1}{r(\xi,t)}}
\\
&=&f(x)(u^*(\xi,t))^{1+\beta n+2\beta}(r^*(x,t))^{1-\alpha -2\beta-\beta n}(K^*)^{-\beta}-\eta u^*(\xi,t)
\\
&=&f(x)(u^*(\xi,t))^{1+\beta n+2\beta}(r^*(x,t))^{1-\alpha -2\beta-\beta n}(\sigma_n^*[w_{u^*}])^{\beta}-\eta u^*(\xi,t).
\end{eqnarray*}
where $r^*(x,t)=\sqrt{(u^*)^2+|\nabla u^*|^2}(\xi,t)$, $\sigma_n^*=\sigma_n[W_{u^*} ]$ and $W_{u^*}=\nabla^2u^*+uI$.\\
By Lemma \ref{rC0estimate,rC1estimate}, $\frac{1}{C}\le u^*\le C$ and $|\nabla u^*|\le C$ for some $C$ only depending on the initial hypersurface.

\begin{lem}\label{boundsprincipleradii}
Let $X(\cdot, t)$ be the solution to the normalised flow \eqref{NSF} which encloses the origin. Then there is a constant $C$ depending only on the initial hypersurface and $f$, $\alpha$, $\beta$, such that the principal curvature radii of $X(\cdot, t)$ are bounded from above and below
\[C^{-1}\le\lambda_i(.,t)\le C\] for all $t\in[0,T)$ and $i=1,...,n$.
\end{lem}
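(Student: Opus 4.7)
The plan is to separate the two bounds: once an upper bound $\lambda_i(\cdot,t)\le C$ is established, the matching lower bound comes for free. Indeed, by Lemma \ref{lowerboundsigma_n} we already have $\prod_i\lambda_i=\sigma_n\ge c>0$, so $\lambda_j\ge c/\prod_{i\ne j}\lambda_i\ge c/C^{n-1}$. Thus the content of the lemma is the upper bound on the principal radii.

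For the upper bound, rather than attacking the largest eigenvalue of $W_u$ under the contracting flow \eqref{NSF} directly, I would follow the polar-body strategy of Ivaki \cite{Iva14} and \cite{LSW16}. The dual support function $u^*=1/r$ satisfies the expanding-type equation displayed just before the statement of the lemma, in which the speed is $(\sigma_n^*)^\beta$ (up to factors of $u^*,r^*$ that are under control by Lemma \ref{rC0estimate,rC1estimate}). An upper bound on the principal radii of $\mathcal{M}_t$ is, via the pointwise polar correspondence that underlies \eqref{polarrelation} (and is obtained at the frame level from \eqref{hbarij}, \eqref{nablar}, \eqref{relationeqs}), equivalent to a uniform lower bound on the principal curvatures of $\mathcal{M}^*_t$, i.e.\ to a uniform upper bound on the largest eigenvalue $\lambda^*_{\max}$ of $W_{u^*}=\nabla^2 u^*+u^*I$. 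This is the natural quantity to bound along an expanding flow, because the linearization is a concave parabolic operator with the favorable sign for the maximum principle applied to $\lambda^*_{\max}$.

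To implement this, I would apply the parabolic maximum principle to an auxiliary scalar of the form
\[
\mathcal{W}(\xi,t)=\log\lambda^*_{\max}(\xi,t)-A\log u^*(\xi,t)+\tfrac{B}{2}|\nabla u^*|^2,
\]
with $A$, $B$ large constants to be chosen. At an interior space–time maximum, rotate so that $W_{u^*}$ is diagonal with $\lambda^*_{\max}=(W_{u^*})_{11}$. Differentiating the dual flow equation twice and using the standard commutation identities, the leading diffusion $\beta(\sigma_n^*)^\beta (W_{u^*})^{-1}_{ii}\nabla_i\nabla_i$ together with the correction terms $-A\log u^*$ and $\tfrac{B}{2}|\nabla u^*|^2$ furnishes a strictly negative contribution once $\lambda^*_{\max}$ is large. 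The remaining reaction terms are controlled by the $C^0$ and $C^1$ bounds on $u^*$ from Lemma \ref{rC0estimate,rC1estimate}, the lower bound on $\sigma_n$ (hence upper bound on $\sigma_n^*$) from Lemma \ref{lowerboundsigma_n}, and the uniform bound on $\eta(t)$ from Lemmata \ref{upboundeta} and \ref{boundeta}. This forces $\lambda^*_{\max}\le C$; passing back through the polar correspondence gives $\lambda_i\le C$ for $\mathcal{M}_t$, and the lower bound follows as above.

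The main obstacle in carrying out this program is the bookkeeping. First, the function $f(x)$ in the dual flow depends on $\xi$ nonlocally — through the outer normal $x=x(\xi)$ of $\mathcal{M}_t$, itself a function of $\nabla u^*$ — so its spatial derivatives produce terms that must be absorbed using the $C^1$ bound on $u^*$. Second, the exponent $1-\alpha-2\beta-n\beta$ multiplying $\log r^*$ changes sign across the regimes $1-n\beta-2\beta<\alpha<1+n\beta$ with $\alpha\ne 1-\beta$, $\alpha\ge 1+n\beta$, and $\alpha=0,\beta=1$, so the reaction terms must be reorganized and the constants $A,B$ retuned case by case. Once this is done, the concavity coming from the $(\sigma_n^*)^\beta$ factor together with the auxiliary terms dominates uniformly, closing the argument.
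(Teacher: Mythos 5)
Your high-level strategy — dualize, then bound a curvature quantity of $\mathcal{M}^*_t$ by a maximum-principle argument, then translate back — is exactly the route the paper takes, and your observation that the lower bound $\lambda_i\ge c/C^{n-1}$ is free once $\lambda_i\le C$ and $\sigma_n\ge c$ are known is correct. However, you have the polar correspondence backwards, and this sends the whole argument after the wrong quantity.

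At polar points $p\leftrightarrow p^*$ the principal radii satisfy (up to bounded conformal factors coming from $u,u^*,\nabla u^*$) $\lambda_i\,\lambda^*_i\approx 1$: an elongated direction on $\mathcal{M}$ ($\lambda_i$ large) corresponds to a squashed direction on $\mathcal{M}^*$ ($\lambda^*_i$ small). Therefore an upper bound on the principal radii of $\mathcal{M}_t$ is equivalent to a \emph{lower} bound on the principal radii of $\mathcal{M}^*_t$, i.e.\ an \emph{upper} bound on its principal curvatures, i.e.\ an upper bound on the largest eigenvalue of $(W_{u^*})^{-1}$. Your proposal instead targets an upper bound on the largest eigenvalue $\lambda^*_{\max}$ of $W_{u^*}$ itself; even if the maximum principle closed for $\log\lambda^*_{\max}$, a bound $\lambda^*_{\max}\le C$ only yields $\lambda_{\min}\ge c$ for $\mathcal{M}_t$, which is the bound that was already free, not the one you need. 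And one cannot upgrade $\lambda^*_{\max}\le C$ to $\lambda^*_{\min}\ge c$ by volume considerations: the relation \eqref{polarrelation} together with Lemma \ref{lowerboundsigma_n} only gives $\sigma_n^*\le C$, not a lower bound, so the individual eigenvalues of $W_{u^*}$ remain uncontrolled from below.

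The paper's auxiliary function is
\[
w(\xi,t,\tau)=\log h_*^{\tau\tau}-\varepsilon\log u^*+\tfrac{M}{2}\bigl({u^*}^2+|\nabla u^*|^2\bigr),
\]
where $h_*^{ij}$ is the \emph{inverse} of $h^*_{ij}=W_{u^*}$, so that $\sup_\tau h_*^{\tau\tau}=1/\lambda^*_{\min}$; the maximum principle gives $h_*^{11}\le C$, hence $K^*\le C$, hence via \eqref{polarrelation} $\sigma_n\le C$, and only then the two-sided eigenvalue bound is assembled with Lemma \ref{lowerboundsigma_n}. Two secondary points worth flagging: the paper works with $\log h_*^{\tau\tau}$ for a fixed direction $\tau$ (smooth) rather than with $\log\lambda^*_{\max}$ (only Lipschitz where eigenvalues cross), which is the standard way to make the test function differentiable at the maximum; and the sign of the exponent $1-\alpha-2\beta-n\beta$ does not actually change across the regimes considered — it is negative throughout $\alpha>1-n\beta-2\beta$ — so the case-splitting you anticipate in the reaction terms is not the real difficulty.
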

\begin{proof}
Let $h^*_{ij}=u^*_{ij}+u^*\delta_{ij}$, and $h_*^{ij}$ be the inverse matrix of $h^*_{ij}$.
Consider the auxiliary function
\[w(\xi, t, \tau)=\log h_*^{\tau\tau} -\varepsilon \log u^*+\frac{M}{2}({u^*}^2+|\nabla u^*|^2),\]
where
$\tau$ is a unit vector in the tangential space of $S^n$, while $\epsilon $ and $M$ are large constants to be decided. Assume $w$ achieve its maximum at $(\xi_0,t_0)$ in the direction $\tau=(1, 0, \cdots, 0)$. By a coordinate rotation, $h^*_{ij}$ and $h_*^{ij}$ are diagonal at this point. Then at the point $(\xi_0,t_0)$.
\[w=\log h_*^{11}-\varepsilon  \log u^*+\frac{M}{2}({u^*}^2+|\nabla u^*|^2),\]
\begin{eqnarray}\label{partial1}
0=\nabla_iw=-h_*^{11}\nabla_ih^*_{11}-\varepsilon \frac{\nabla_{i}u^*}{u^*}+Mu^*u^*_{i}+M\nabla_{k}u^*\nabla_{ki}u^*,
\end{eqnarray}
\begin{eqnarray}\label{partial2}
0\ge\nabla_{ij}w&=&-h_*^{11}\nabla_{ij}h^*_{11}+2h_*^{11}h_*^{kk}\nabla_{1}h^*_{ik}\nabla_{1}h^*_{kj}-(h_*^{11})^2\nabla_ih^*_{11}\nabla_jh^*_{11}
-\varepsilon \frac{\nabla_{ij}u^*}{u^*}
\nonumber\\&{}&+\varepsilon \frac{\nabla_{i}u^*\nabla_{j}u^*}{{u^*}^2}+Mu^*_iu^*_j+Mu^*u^*_{ij}+M\nabla_{ki}u^*\nabla_{kj}u^*+M\nabla_ku^*\nabla_{kij}u^*
\end{eqnarray}
Set $\Phi=f(u^*)^{1+\beta n+2\beta}(r^*)^{1-\alpha -2\beta-\beta n}$, we have
\begin{eqnarray}
0\le\partial_tw&=&-h_*^{11}\partial_th^*_{11}-\varepsilon \frac{\partial_tu^*}{u^*}+Mu^*\nabla_{t}u^*+M\nabla_{k}u^*\nabla_{kt}u^*
\nonumber\\
&=&-h_*^{11}\Big(\Phi_{11}{\sigma^*_n}^\beta+2\beta{\sigma^*_n}^{\beta-1}\nabla_1\Phi\nabla_1\sigma^*_n+\beta(\beta-1)\Phi{\sigma^*_n}^{\beta-2}(\nabla_1\sigma^*_n)^2
\nonumber\\
&{}&+\beta\Phi{\sigma^*_n}^{\beta-1}\nabla_{11}\sigma^*_n
-\eta h^*_{11}+\Phi{\sigma^*_n}^{\beta}\Big)-\varepsilon \frac{\Phi {\sigma^*_n}^{\beta}-\eta u^*}{u^*}
\nonumber\\
&{}&+Mu^*(\Phi {\sigma^*_n}^{\beta}-\eta u^*)+Mu^*_k(\Phi {\sigma^*_n}^{\beta}-\eta u^*)_k
\nonumber\\
&=&-h_*^{11}\Big(\Phi_{11}{\sigma^*_n}^\beta+2\beta{\sigma^*_n}^{\beta}\nabla_1\Phi h_*^{ij}\nabla_1h^*_{ij}+\beta(\beta-1)\Phi{\sigma^*_n}^{\beta}(h_*^{ij}\nabla_1h^*_{ij})^2
\nonumber\\
&{}&+\beta\Phi{\sigma^*_n}^{\beta}\big(h_*^{ij}\nabla_{ij}h^*_{11}+n-h^*_{11}\sum_ih_*^{ii}-h_*^{ii}h_*^{jj}(\nabla_1h^*_{ij})^2+(h_*^{ij}\nabla_1h^*_{ij})^2\big)
\nonumber\\
&{}&-\eta h^*_{11}+\Phi{\sigma^*_n}^{\beta}\Big)
-\varepsilon \frac{\Phi {\sigma^*_n}^{\beta}-\eta u^*}{u^*}+Mu^*(\Phi {\sigma^*_n}^{\beta}-\eta u^*)+Mu^*_k(\Phi {\sigma^*_n}^{\beta}-\eta u^*)_k
\nonumber
\end{eqnarray}
By \eqref{partial2} and multiplying $\Phi^{-1}{\sigma^*_n}^{-\beta}$ the two sides of the above inequality,we obtain
\begin{eqnarray*}
0&\le&-h_*^{11}\frac{\nabla_{11}\Phi}{\Phi}+h_*^{11}\big((\frac{\nabla_1\Phi}{\Phi})^2+\beta^2(h_*^{ij}\nabla_1h^*_{ij})^2\big) -h_*^{11}\beta^2(h_*^{ij}\nabla_1h^*_{ij})^2
\nonumber\\
&{}&+\beta\Big(-2h_*^{11}h_*^{ij}h_*^{kk}\nabla_1h^*_{ik}\nabla_1h^*_{jk}
+(h_*^{11})^2h_*^{ij}\nabla_ih^*_{11}\nabla_jh^*_{11}
+n\varepsilon  {u^*}^{-1}
\nonumber\\
&{}&-\varepsilon \sum_ih_*^{ii}-\varepsilon  h_*^{ij}u^*_iu^*_j{u^*}^{-2}-Mh_*^{ij}h^*_{ik}h^*_{jk}+nMu^*
-Mh_*^{ij}u^*_k\nabla_kh^*_{ij}\Big)
\nonumber\\
&{}&-h_*^{11}(1+n\beta)+\beta\sum_ih_*^{ii}+\beta h_*^{11}h_*^{ii}h_*^{jj}(\nabla_1h^*_{ij})^2+Mu^*+Mu^*_k\frac{\nabla_k\Phi}{\Phi}
\nonumber\\&{}&+\beta M h_*^{ij}u^*_k\nabla_kh^*_{ij}
-\eta M\frac{{u^*}^2+|\nabla u^*|^2}{\Phi{\sigma^*_n}^\beta}-\varepsilon  {u^*}^{-1}+\eta\frac{\varepsilon +1}{\Phi{\sigma^*_n}^\beta}
\\&\le&-2\beta h_*^{11}h_*^{ij}h_*^{kk}\nabla_{1}h^*_{ik}\nabla_{1}h^*_{kj}+\beta(h_*^{11})^2h_*^{ij}\nabla_{i}h^*_{11}\nabla_{j}h^*_{11}+\beta h_*^{11}h_*^{ii}h_*^{jj}(\nabla_{1}h^*_{ij})^2
\nonumber\\
&{}&+\beta\frac{n\varepsilon }{u^*}
-\beta\varepsilon \sum h_*^{ii}
+\beta\sum_ih_*^{ii}+Mu^*+h_*^{11}(\frac{\nabla_1\Phi}{\Phi})^2
\nonumber\\
&{}&+M\beta nu^*-h_*^{11}\frac{\nabla_1\nabla_1\Phi}{\Phi}+ Mu^*_k\frac{\nabla_k\Phi}{\Phi}-\eta\frac{M{r^*}^2-\varepsilon -1}{\Phi {\sigma^*_n}^\beta}
\\&\le& C_0-\beta(\varepsilon -1) h_*^{11}+h_*^{11}(\frac{\nabla_1\Phi}{\Phi})^2-h_*^{11}\frac{\nabla_1\nabla_1\Phi}{\Phi} +Mu^*_k\frac{\nabla_k\Phi}{\Phi}-\eta\frac{M{r^*}^2-\varepsilon -1}{\Phi {\sigma^*_n}^\beta},
\end{eqnarray*}
where we use the Cauchy inequality $2\beta\frac{\nabla_1\Phi}{\Phi} h_*^{ij}\nabla_1h^*_{ij}\le(\frac{\nabla_1\Phi}{\Phi})^2+\beta^2(h_*^{ij}\nabla_1h^*_{ij})^2$ for the second term.\\
Since $\nabla_k\Phi=\nabla_k\big(f(u^*)^{1+\beta n+2\beta}(r^*)^{1-\alpha -2\beta-\beta n}\big)$, and it is direct to calculate
\[r^*_k=\frac{u^*u^*_k+\sum_iu^*_iu^*_{ik}}{r^*}=\frac{u^*_kh^*_{kk}}{r^*}\]
\[r^*_{kl}=\frac{u^*u^*_{kl}+u^*_ku^*_l+\sum_iu^*_iu^*_{ikl}+\sum_iu^*_{ik}u^*_{il}}{r^*}-\frac{u^*_ku^*_lh^*_{kk}h^*_{ll}}{(r^*)^3}\]
hence, by \eqref{partial1}, we obtain
\begin{eqnarray*}
& &h_*^{11}(\frac{\nabla_1\Phi}{\Phi})^2-h_*^{11}\frac{\nabla_1\nabla_1\Phi}{\Phi} +Mu^*_k\frac{\nabla_k\Phi}{\Phi}\\
&\le& Ch_*^{11}+C+C\frac{1}{h_*^{11}}+Ch_*^{11}(u^*_{11})^2+CM+Ch_*^{11}u^*_ku^*_{k11}+CM\Sigma _k{u^*_ku^*_kh^*_{kk}}
\nonumber\\&\le&Ch_*^{11}+C\frac{1}{h_*^{11}}+C+C\varepsilon+CM
\end{eqnarray*}
Choosing $M\ge\frac{\varepsilon+1}{\min {r^*}^2}$, the inequality becomes
\[0\le-\beta(\varepsilon -1) h_*^{11}+Ch_*^{11}+C\frac{1}{h_*^{11}}+C+C\varepsilon+CM\le-\beta\varepsilon h_*^{11}+Ch_*^{11}+C\varepsilon+CM\]
By choosing $\varepsilon $ large to get
\[
0\le C_1-C_2h_*^{11}.
\]
That is, $h_*^{11}\le C_3$, where $C_3$ is a constant depending only on $f$, $\alpha$, $\beta$ and the initial hypersurface. Hence the Gauss curvature of $\mathcal{M}^*_t$, $K^*\le C$.
From \eqref{polarrelation}, we know $\sigma_n=\lambda_1\cdots\lambda_n\le C_1$.
 Therefore we get the $C^2$ estimate $C^{-1}\le\lambda_i\le C, i=1,\ldots,n$ by Lemma \ref{lowerboundsigma_n} for the solutions to the normalised flow \eqref{NSF}.
\end{proof}

\section{Proof of Theorems \ref{main1}-\ref{main3}}
\begin{proof}[Proof of Theorems \ref{main1}-\ref{main2}]
From the estimates obtained in Lemma \ref{boundsprincipleradii}, we know that the equations \eqref{NSF} are uniformly parabolic. By the $C^0$ estimates Lemmas \ref{C0estimate1} and Lemmas \ref{C0estimate2}, the gradient estimates (Lemma \ref{C1estimate}) the $C^2$ estimates Lammas \ref{boundsprincipleradii}, and the Krylov's theory \cite{Knv87}, we get the H\"{o}lder continuity of $\nabla^2u$ and $u_t$. Then we can get higher order derivation estimates by the regularity theory of the uniformly parabolic equations. Therefore we get the long time existence and the uniqueness of the smooth solution to the normalized flows \eqref{NSF}. Recall the Lemma \ref{monotone12}, Lemma \ref{monotone3} and Lemma \ref{monotone4}, we complete the proof.
\end{proof}
\begin{proof}[Proof of Theorem \ref{main3}]
For the case $\alpha\ge1+n\beta$, $\beta>0$.
To complete the proof of Theorem \ref{main3}, it suffices to show that the solution of \eqref{elliptic eq} is unique.

Case 1: $\alpha>1+n\beta$.
Let $u_1$ ,$u_2$ be two smooth solutions of \eqref{elliptic eq}, i.e. \\
\[fu_1^{\alpha-1}\sigma_n^{-\beta}(\nabla^2u_1+u_1I)=c , fu_2^{\alpha-1}\sigma_n^{-\beta}(\nabla^2u_2+u_2I)=c.\]
Suppose $M=\frac{u_1}{u_2}$ attains its maximum at $X_0\in S^n$, then at $x_0$,
\[0=\nabla{\log M}=\frac{\nabla u_1}{u_1}-\frac{\nabla u_2}{u_2},\]
\[0\ge\nabla^2{\log M}=\frac{\nabla^2u_1}{u_1}-\frac{\nabla^2u_2}{u_2}.\]
Hence at $x_0$, we get
\[1=\frac{u_1^{\alpha-1}\sigma_n^{-\beta}(\nabla^2u_1+u_1I)}{u_2^{\alpha-1}\sigma_n^{-\beta}(\nabla^2u_2+u_2I)}=\frac{u_1^{\alpha-1-n\beta}\sigma_n^{-\beta}(\frac{\nabla^2u_1}{u_1}+I)}{u_2^{\alpha-1-n\beta}\sigma_n^{-\beta}(\frac{\nabla^2u_2}{u_2}+I)}\ge M^{\alpha-1-n\beta}.\]
Since $\alpha>1+n\beta$, $M(x_0)=\max_{S^n} M\le1$. Similarly one can show $\min_{S^n} M\ge1$. Therefore $u_1\equiv u_2$.

Case 2: $\alpha=1+n\beta$, the elliptic equation \eqref{elliptic eq} can be written as $f^\frac{1}{\beta}u^n\sigma_n^{-1}=c$, the uniqueness of the solution has been proved in \cite{Luk93}, and Chou-Wang \cite{ChWang06} also provide a method to get the solutions for equation differ only by a dilation. We omit the proof here. Hence we complete the proof of Theorem \ref{main3}.
\end{proof}

\begin{proposition}\label{alpha}
For $1<\alpha<1+n\beta$, the solution of \eqref{elliptic eq} is unique.
\end{proposition}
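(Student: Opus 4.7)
The plan is to recast \eqref{elliptic eq} as an $L_p$ Minkowski problem and then invoke Lutwak's sharp $L_p$ Minkowski inequality. Setting
\[
p := 1 + \frac{\alpha - 1}{\beta}, \qquad \bar\phi := (f/c)^{1/\beta},
\]
the hypothesis $1 < \alpha < 1 + n\beta$ translates to $1 < p < n+1$, and \eqref{elliptic eq} becomes the smooth $L_p$ Minkowski equation
\[
\sigma_n([\nabla^2 u + u I]) = \bar\phi\, u^{p-1}.
\]

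Suppose $u_1, u_2$ are two smooth uniformly convex solutions, with associated convex bodies $\Omega_1, \Omega_2$. For the $L_p$ mixed volume
\[
V_p(\Omega_i, \Omega_j) := \frac{1}{n+1}\int_{S^n} u_j^p\, u_i^{1-p}\, \sigma_n([\nabla^2 u_i + u_i I])\, dx,
\]
the equation forces the $L_p$ surface area measure $u_i^{1-p}\sigma_n\, dx$ to coincide with $\bar\phi\, dx$, which does not depend on $i$. Hence
\[
V_p(\Omega_i, \Omega_j) = \frac{1}{n+1}\int_{S^n} u_j^p\, \bar\phi\, dx
\]
is independent of the first argument. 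Taking $i = j$ identifies this common value with $V_p(\Omega_j, \Omega_j) = V(\Omega_j)$, so in particular
\[
V_p(\Omega_1, \Omega_2) = V(\Omega_2), \qquad V_p(\Omega_2, \Omega_1) = V(\Omega_1).
\]

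Next I would apply Lutwak's $L_p$ Minkowski inequality, valid for $p > 1$ on smooth uniformly convex bodies containing the origin,
\[
V_p(K, L)^{n+1} \ge V(K)^{n+1-p}\, V(L)^p,
\]
with equality if and only if $K$ and $L$ are dilates. Substituting the two identities above yields $V(\Omega_2)^{n+1-p} \ge V(\Omega_1)^{n+1-p}$, and the reverse inequality by symmetry; since $p < n+1$ makes $n+1-p > 0$, this forces $V(\Omega_1) = V(\Omega_2)$. The equality case then gives $\Omega_1 = \lambda\,\Omega_2$ for some $\lambda > 0$, and the equal-volume condition yields $\lambda^{n+1} = 1$, so $\lambda = 1$ and $u_1 \equiv u_2$.

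The main obstacle, indeed the heart of the argument, is the sharp $L_p$ Minkowski inequality together with its equality characterization, which is the classical result of Lutwak \cite{Luk93}. Once this is invoked, everything reduces to the observation that the $L_p$ surface area measure is forced by the elliptic equation to be the common measure $\bar\phi\, dx$; this rigidity converts uniqueness of the measure into uniqueness of the body. Note that the scheme degenerates precisely at $p = n+1$, i.e.\ $\alpha = 1+n\beta$, where $V(\Omega_1) = V(\Omega_2)$ can no longer be extracted, which is consistent with the uniqueness-only-up-to-dilation recorded in Theorem \ref{main3} at that endpoint.
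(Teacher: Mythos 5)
Your proof is correct and is essentially the same as the paper's, just phrased at a higher level of abstraction: the paper proves the key inequality by applying the Alexandrov--Fenchel inequality (Lemma \ref{AFI}) to $V_{n+1}(u_1,u_2,\dots,u_2)$ and then H\"older's inequality, whereas you invoke Lutwak's $L_p$ Minkowski inequality $V_p(K,L)^{n+1}\ge V(K)^{n+1-p}V(L)^p$ for $p=1+\frac{\alpha-1}{\beta}\in(1,n+1)$, which is precisely the Alexandrov--Fenchel-plus-H\"older combination packaged into a single statement. Both arguments then conclude via the equality characterization of the underlying mixed-volume inequality, and both correctly observe that the endpoint $p=n+1$ (i.e.\ $\alpha=1+n\beta$) degenerates to uniqueness only up to dilation.
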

\begin{proof}
Let $u_1$ ,$u_2$ be two smooth solutions of \eqref{elliptic eq},  i.e.
\[fu_1^{\alpha-1}\sigma_n^{-\beta}(\nabla^2u_1+u_1I)=c , fu_2^{\alpha-1}\sigma_n^{-\beta}(\nabla^2u_2+u_2I)=c.\]

Using the same argument in \cite{GuX18}, by the Alexandrov-Fenchel inequality in Lemma \ref{AFI}, we have
\begin{eqnarray*}
&{}&\int_{S^n}u_1u_2^{\frac{\alpha-1}{\beta}}(cf^{-1})^{-\frac{1}{\beta}}dx=\int_{S^n}u_1\sigma_n(\nabla^2u_2+u_2I)dx
=V_{n+1}(u_1,u_2,\cdots,u_2)
\\&\ge&V_{n+1}(u_2,u_2,\cdots,u_2)^{\frac{n}{n+1}}V_{n+1}(u_1,u_1,\cdots,u_1)^{\frac{1}{n+1}}
\\&=&\Big(\int_{S^n}(cf^{-1})^{-\frac{1}{\beta}}u_2^{\frac{\alpha-1+\beta}{\beta}}dx\Big)^\frac{n}{n+1}\Big(\int_{S^n}(cf^{-1})^{-\frac{1}{\beta}}u_1^{\frac{\alpha-1+\beta}{\beta}}dx\Big)^\frac{1}{n+1}.
\end{eqnarray*}
On the other hand, H\"{o}lder inequality gives
\[\int_{S^n}u_1u_2^{\frac{\alpha-1}{\beta}}(cf^{-1})^{-\frac{1}{\beta}}dx\le\Big(\int_{S^n}(cf^{-1})^{-\frac{1}{\beta}}u_1^{\frac{\alpha-1+\beta}{\beta}}dx\Big)^\frac{\beta}{\alpha-1+\beta}\Big(\int_{S^n}(cf^{-1})^{-\frac{1}{\beta}}u_2^{\frac{\alpha-1+\beta}{\beta}}dx\Big)^\frac{\alpha-1}{\alpha-1+\beta}.
\]
Combining the above two inequalities, for $1<\alpha<1+n\beta$, $\beta\ge1$, we have
\[\int_{S^n}(cf^{-1})^{-\frac{1}{\beta}}u_1^{\frac{\alpha-1+\beta}{\beta}}dx \ge \int_{S^n}(cf^{-1})^{-\frac{1}{\beta}}u_2^{\frac{\alpha-1+\beta}{\beta}}dx. \]
Similar argument by interchanging the role of $u_1$ and $u_2$ gives
\[\int_{S^n}(cf^{-1})^{-\frac{1}{\beta}}u_2^{\frac{\alpha-1+\beta}{\beta}}dx \ge \int_{S^n}(cf^{-1})^{-\frac{1}{\beta}}u_1^{\frac{\alpha-1+\beta}{\beta}}dx \]
Therefore all the above inequalities are equalities. Using the equality condition in the Alexandrov-Fenchel inequality in Lemma \ref{AFI}, we have $u_1\equiv u_2$.
\end{proof}

\vskip10pt



\begin{thebibliography}{99}

\bibitem{And94}          Andrews, B.:
                                   Contraction of convex hypersurfaces in Euclidean space.
                                   Calc. Var. Partial Differential Equations 2 (1994), no. 2, 151--171.
\bibitem{And94(1)}       Andrews, B.:
                                   Entropy estimates for evolving hypersurfaces.
                                   Communications in Analysis and Geometry, (1994), 2(1),267-275.
\bibitem{And97}
                         Andrews, B.:
                                   Monotone quantities and unique limits for evolving convex hypersurfaces. International Mathematics Research Notices,(1997), 20(20),1001-1031.

\bibitem{And96}           Andrews, B.:
                                  Contraction of convex hypersurfaces by their affine normal.
                                  J. Differential Geom., 43 (1996), 207-230.

\bibitem{And99}           Andrews, B.:
                                    Gauss curvature flow: the fate of the rolling stones.
                                    Invent. Math., 138 (1999), no. 1, 151--161.

\bibitem{And00}           Andrew, B.:
                                    Motion of hypersurfaces by Gauss curvature.
                                    Pacific J. Math., 195(2000), 1-34.

\bibitem{AGN16}             Andrews, B.; Guan, Pengfei.;  Ni, Lei.:
                                    Flow by powers of the Gauss curvature.
                                    Adv. Math., 299(2016), 174-201.

\bibitem{BCD16}          Brendle, S.; Choi, K.; Daskalopoulos, P.:
                                     Asymptotic behavior of flows by powers of the Gauss curvature.,
                                     Acta Mathematica., 219(2016).

\bibitem{BIS16}          Bryan, P.; Ivaki, M.; Scheuer, J.:
                                     A unified flow approach to smooth, even $L_p$-Minkowski problems.
                                     Analysis and PDE., 12(2019),259-280.

\bibitem{BLDZ13}         B\"{o}r\"{o}czky, J.; Lutwak, E.;
                             D. Yang; G.Zhang.:
                                       The logarithmic Minkowski problem.
                                  J. Amer. Math. Soc.26 (2013), 831-852.

\bibitem{C06}             Chen, W.:
                                  $L_p$ Minkowski problem with not necessarily positive data.
                                     Adv. Math.201 (2006), pp. 77-89.

\bibitem{CLZ17}             Chen, S.; Li, Q.-R.; Zhu, G.:
                                On the Lp Monge-Amp$\grave{e}$re equation.
                                Journal of Differential Equations. 263(2017), 4997-5011.

\bibitem{CY77}             Cheng, S.-Y.; Yau, S.-T.:
                                  On the regularity of the n-dimensional Minkowski problem.
                                  Comm. Pure Appl. Math. 20(1977), 41-68.

\bibitem{ChoiDask16}       Choi, K.; Daskalopoulos, P.:
                                     Uniqueness of closed self-similar solutions to the Gauss curvature flow.
                                     ArXiv:1609.05487.

\bibitem{ChWang00}         Chou, K.-S.; Wang, X.-J.:
                                    A logarithmic Gauss curvature flow and the Minkowski problem.
                                    Annales De Linstitut Henri Poincare, 2000, 17(6):733-751.

\bibitem{ChWang06}          Chou, K.-S.; Wang, X.-J.:
                                   The $L_p$ Minkowski problem and the Minkowski problem in centroaffine geometry.
                                   Adv. in Math., 205 (2006), 33-83.


\bibitem{Chow85}         Chow, B.:
                                    Deforming convex hypersurfaces by the $n$-th root of the Gaussian curvature.
                                    J. Differential Geom. 22 (1985), no. 1, 117--138.

\bibitem{Chow87}         Chow, B.:
                                   Deforming convex hypersurfaces by the square root of the scalar curvature.
                                   Invent. Math. 87 (1987), no. 1, 63--82.

\bibitem{ChowH97}          Chow, B.; Tsai, D. H. :
                                   Expansion of convex hypersurfaces by nonhomogeneous functions of curvature.
                                   Asian J. Math. 1(1997):769-784.


\bibitem{Fir74}             Firey, W. J.:
                                     Shapes of worn stones.
                                     Mathematika. 21 (1974), 1--11.


\bibitem{Gerh14}            Gerhardt, C.:
                                     Non-scale-invariant inverse curvature flows in Euclidean space.
                                     Car. Var. Partial Differential Equations. 49(2014):471-489.

\bibitem{GuNi17}            Guan, P. F.; Ni, L.:
                                       Entropy and a convergence theorem for Gauss curvature flow in high dimensions,
                                       J. Eur. Math. Soc. 19(2017), pp. 3735-3761.

\bibitem{GuX18}             Guan, P. F.; Xia, C.:
                                       $L^p$ Christoffel-Minkowski problem: the case $1<p<k+1$.
                                       Cal. Var. Partial Differential Equations. 57(2018), no.2, Art.69, 23pp.

\bibitem{CEDZ10}       Haberl, C.; Lutwak, E.; Yang, D.; Zhang, G.:
                                   The even Orlicz Minkowski problem.
                                  Adv. in Math., 224(2010),2485-2510.

\bibitem{HLW16}             He, Y.; Li, Q.-R.; Wang, X.-J.:
                                   Multiple solutions of the $L_p$-Minkowski problem.
                                   Calculus of Variations and Partial Differential Equations, 55(5), (2016),117.

\bibitem{H94}               H\"{o}rmander, L.:
                                      Notions of Convexity, Birkhauser, Boston,(1994).

\bibitem{HLX15}            Huang Y.; Liu J-K.; Wang X-J.:
                                    On the uniqueness of $L_p$-Minkowski problem: the constant $p$-curvature case in $R^3$.
                                    Adv. Math. 281(2015), pp.906-927.

\bibitem{HLYZ16}           Huang, Y.; Lutwak, E.; Yang, D.; Zhang, G.:
                                       Geometric measures in the dual  Brunn-Minkowski theory
                                       and their associated Minkowski problems.
                                        Acta Math. 216 (2016), no. 2, 325--388.

\bibitem{HLYZ05}        Hug, D.; Lutwak, E.; Yang, D.; Zhang, G.:
                                 On the $L_p$ Minkowski problem for polytopes.
                                 Discrete Comput. Geom., 33 (2005), pp. 699-715.

\bibitem{Hui84}              Huisken, G.
                                      Flow by mean curvature of convex surfaces into spheres.
                                      J. Differential Geom. 20 (1984), no. 1, 237--266.

\bibitem{Iva14}              Ivaki, M.N.:
                                      An application of dual convex bodies to the inverse Gauss curvature flow.
                                      Proc. Amer. Math. Soc.,
                                      143(2014),no.3, pp.1257-1271.

\bibitem{Iva2018}            Ivaki, M.N.:
                                      Deforming a hypersurface by principal radii of curvature and support function. Car. Var., 58(2019). https://doi.org/10.1007/s00526-018-1462-3.


\bibitem{JLW15}           Jian, H.; Lu, J.; Wang, X.-J.:
                                      Nonuniqueness of solutions to the Lp-Minkowski problem.
                                     Advances in Mathematics, 281(2015), pp.845-856,

\bibitem{Knv87}              Krylov, N.V.:
                                      Nonlinear elliptic and parabolic equations of the second order.
                                      D.Reidel Publishing Co.,Dordrecht,1987.xiv+462 pp.

\bibitem{L17}              Li, Qi-Rui.:
                                      Infinitely many solutions for centro-affine Minkowski problem.
                                      International Mathematics Research Notices. No.00(2017), pp.1-20.

\bibitem{LSW16}        Li, Q.-R.; Sheng, W.M.; Wang, X.-J.:
                                   Flow by Gauss curvature to the Aleksandrov and dual Minkowski problems.
                                   J. Eur. Math. Soc., 22(2020), 893-923.

\bibitem{LSW18}        Li, Q.-R.; Sheng, W.M.; Wang, X.-J.:
                                   Asymptotic convergence for a class of fully nonlinear curvature flows.
                                   The Journal of Geometric Analysis.
                                   https://doi.org/10.1007/s12220-019-00169-4.

\bibitem{LW13}             Lu, J.; Wang, X.-J.:
                                     Rotationally symmetric solution to the $L_p$-Minkowski problem.
                                      J. Differential Equations, 254 (2013), pp. 983-1005.

\bibitem{Luk93}            Lutwak, E.:
                                       The Brunn-Minkowski-Firey theory. I. Mixed volumes and the Minkowski problem.
                                       J. Differential Geom. 38 (1993), no. 1, 131-150.

\bibitem{LO95}             Lutwak, E; Oliker,V.:
                                     On the regularity of solutions to a generalization of the Minkowski problem. J. Differential Geom. 41 (1995), no. 1, 227-246.


\bibitem{P78}             Pogorelov. A.V.:
                                   The multidimensional Minkowski problem.
                                   Wiley, New York, 1978.

\bibitem{Sch14}           Schneider, R.:
                                   Convex bodies: the Brunn-Minkowski theory.
                                   Cambridge University Press, Cambridge 2014.

\bibitem{Urb91}              Urbas, J.:
                                       An expansion of convex hypersurfaces.
                                       J. Differential Geom. 33 (1991), no. 1, 91--125.

\bibitem{Wang96}           Wang, X.-J.:
                                       Existence of convex hypersurfaces with prescribed Gauss-Kronecker curvature.
                                       Trans. Amer. Math. Soc. 348 (1996), 4501--4524.


\bibitem{X16}             Xia, C.:
                                Inverse anisotropic curvature flow from convex hypersurfaces.
                                J Geom Anal. 27(2016), no. 3, 1--24.


\bibitem{Zhu15}
                           Zhu, G.:
                                    The $L_p$ Minkowski problem for polytopes for $0<p<1$.
                                    Journal of Functional Analysis. 269(2015),1070-1094.

\end{thebibliography}
\end{document}